\newcommand\Zmod[1]{\mathbb{Z}/{#1}\mathbb{Z}}
\newtheorem {theorem}{Theorem}[section]
\newtheorem {lemma}[theorem]{Lemma}
\newtheorem {prop}[theorem]{Proposition}
\newtheorem {conjecture}[theorem]{Conjecture}
\theoremstyle{definition}
\newtheorem {definition}[theorem]{Definition}
\theoremstyle{remark}
\newcommand\Diagram{\mathcal D}
\newcommand\Z{\mathbb{Z}}
\def\Wedge{\Lambda}
\def\qq {{\mathbb{Q}}}
\def\rk {{\operatorname{rank}}}
\def\fin\qedhere
\def\pr {{\text{pr}}}
\def\cm{\cdot}
\def\Edges{\mathcal{E}}
\def\Crossings{\mathcal X}
\newcommand\rWedge{\Wedge_\circ}
\newcommand\Kh{\mathrm{Kh}'}
\newcommand\oKh{\mathrm{Kh}}
\newcommand\roKh{\overline \oKh}
\newcommand\rKh{\overline \Kh}
\newcommand\prKh{\overline Kh'}
\newcommand\rC{\overline C}
\newcommand\rrC{{\overline C}^{(p)}}
\newcommand\orient{\mathfrak o}
\newcommand{\Cat}{\mathfrak C}
\newcommand{\Obj}{\mathrm{Ob}}
\newcommand{\Tor}{\mathrm{Tor}}
\gdef\reallynopagebreak{\nopagebreak\@nobreaktrue}}
\begin{document}

\title{Odd Khovanov homology}

\author[Peter S. Ozsv\'ath]{Peter Ozsv\'ath}
\thanks {PSO was supported by NSF grant number DMS-0505811 and FRG-0244663}
\address {Department of Mathematics, Columbia University\\ New York, NY 10027}
\email {petero@math.columbia.edu}

\author[Jacob Rasmussen]{Jacob Rasmussen}
\thanks{JR was supported by NSF grant number DMS-0603940 and a Sloan
  Fellowship}
\address{Department of Mathematics, Princeton University\\ Princeton,
  New Jersey 08544 \newline \phantom{xx} and DPMMS, University of Cambridge, UK}
\email {jrasmus@math.princeton.edu}

\author[Zolt{\'a}n Szab{\'o}]{Zolt{\'a}n Szab{\'o}}
\thanks{ZSz was supported by NSF grant number DMS-0704053 and FRG-0244663}
\address{Department of Mathematics, Princeton University\\ Princeton, New Jersey 08544}
\email {szabo@math.princeton.edu}

\begin {abstract} 
  We describe an invariant of links in \(S^3\) which is closely
  related to Khovanov's Jones polynomial homology. Our construction
  replaces the symmetric algebra appearing in Khovanov's definition
  with an exterior algebra. The two invariants have the same reduction
  modulo $2$, but differ over \(\mathbb{Q}\). There is a reduced
  version which is a link invariant whose graded Euler characteristic
  is the normalized Jones polynomial.
\end {abstract}

\maketitle
\section{Introduction}
\label{sec:Introduction}
\maketitle

In his influential paper, Khovanov~\cite{Khovanov} describes a link
invariant which associates to a link a bigraded Abelian group whose
graded Euler characteristic is the Jones polynomial.  His invariant is
obtained from a TQFT which associates to a collection of embedded,
planar circles the symmetric algebra of the vector space generated by
the circles.  Our goal here is to describe a modified version of
Khovanov homology, which associates to a collection of embedded,
planar circles the exterior algebra of the vector space generated by
the circles. 

We explain this invariant. The constructions described here are
clearly quite closely related to Khovanov's.  Indeed, the mod two
reductions of the two theories coincide (cf. Proposition~\ref{prop:ModTwoReduction} below).

\subsection{A projective TQFT}

Consider the category ${\mathfrak C}$ of compact one-manifolds and compact,
orientable cobordisms between them. The starting point for the
construction of the odd Khovanov homology is a ``projective  functor'' \(F\)
from  \({\mathfrak C}\) to the category of graded \(\Z\)--modules. This
functor is ``projective'' in the sense that the map assigned to a 
morphism is well-defined only  up to an overall sign.

An object $S\in\Obj(\Cat)$ is a disjoint union of circles. 
Given such an \(S\), we let $V(S)$ denote the free abelian group
 generated by its components, and define \(F(S) = \Wedge^*
 V(S)\). 
Morphisms in \(\Cat\) are generated by four types of
elementary morphism: zero-handle additions (or {\it births}),
one-handle additions where the feet of the one handle lie on two
different components ({\it merges}), one-handle additions where the
feet lie on the same component ({\it splits}), and two-handle
additions ({\it deaths}). We will be most interested in the morphisms
corresponding to one-handle additions. 

Suppose that $M: S_1 \to S_2 $ is a  merge cobordism which
joins the circles $a_1$ and $a_2$ in \(S_1\). Then there is a natural
identification $V(S_2)\cong V(S_1)/(a_1-a_2)$, and we define
\begin{equation*}
F_M: \Wedge^*V(S_1) \to \Wedge ^*V(S_2)
\end{equation*}
to be the map induced by the projection \(V(S_1) \to V(S_1)/(a_1-a_2)
\cong V(S_2)\). 

Next, suppose that $M: S_1 \to S_2 $ is a  split cobordism in
which a single circle \(a\) in \(S^1\) divides to form two circles
\(a_1\) and \(a_2\) in \(S_2\). Then the  natural
identification $V(S_1)\cong V(S_2)/(a_1-a_2)$
induces an identification
$$\Wedge^* V(S_1) \cong \Wedge^*
\left(\frac{V(S_2)}{(a_1-a_2)}\right)\cong (a_1-a_2)\wedge \Wedge^*
V(S_2).$$ The second isomorphism is well-defined only up to sign.
We define 
\(F_M: \Wedge^*V(S_1) \to \Wedge ^*V(S_2) \)
to be the composition
$$\begin{CD}
\Wedge^*(V(S_1))@>{\cong}>> \Wedge^*\left(\frac{V(S_2)}{(a_1-a_2)}\right)
@>{\cong}>> (a_1-a_2)\wedge \Wedge^* V(S_2)
@>{\subset}>> \Wedge^*V(S_2).
\end{CD}
$$
This map sends $1\in \Wedge^* V(S_1)$ to $\pm(a_1-a_2) \in  \Wedge^*
V(S_2)$. 

For completeness, we record the maps induced by the birth and death
cobordisms as well. A birth cobordism \(M:S_1 \to S_2\) induces an
inclusion \(V(S_1) \to V(S_2)\), and \(F_M\) is the induced map. In a
death cobordism \(M:S_1 \to S_2\), there is a distinguished component
\(A\) in \(S_2\) which is capped off by the two-handle; the map
\(F_M\) is given by contraction with the dual of \(a\). 

The reader can easily verify that up to sign, these maps satisfy all
the identities associated with a TQFT. We could eliminate the sign
ambiguity by tensoring with \(\Z/2\); the resulting TQFT can be
identified with the \(\Z/2\) reduction of the TQFT used by Khovanov in
\cite{Khovanov}. 

\begin{figure}
\begin{center}
\mbox{\vbox{\epsfbox{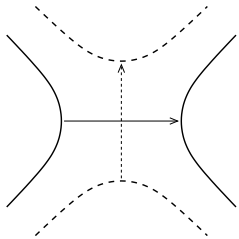}}}
\end{center}
\caption {{\bf Elementary saddle move.}
  This illustrates an elementary saddle move. The initial picture is
  indicated by solid lines: the two solid arcs represent one
  resolution, and the dotted arcs represent the other resolution.  The
  solid oriented arc represents the initial arc, and the dotted one is
  to be rotated $90^\circ$ counterclockwise.}
\label{fig:Move}
\end{figure}

Another approach is to try to deal with the sign
ambiguity by decorating our cobordisms. 
Suppose we have a cobordism $M:S_1 \to S_2$ with a fixed decomposition into
one-handles. Such a cobordism is specified by an $n$-tuple of embedded
zero-spheres $Z_1,...,Z_n\subset S_1$. These can be represented
diagrammatically by joining the two points of \(Z_i\) by an arc
representing the core of the one-handle and
fixing an (arbitrary) orientation on it, as shown in
Figure~\ref{fig:Move}. If the cobordism associated to the handle
addition \(Z_i\) is a split, we fix the sign of \(F_{Z_i}\) by
requiring that it take \(1\) to \(a_1-a_2\), where the arrow points
from \(a_1\) to \(a_2\). 

Given two one-handles $\{Z_1,Z_2\}$, the induced maps commute up to sign:
\begin{equation}
  \label{eq:DefAC}
  F_{Z_2}\circ
  F_{Z_1}=\epsilon\cm F_{Z_1}\circ F_{Z_2}.
\end{equation}
When the composite map is nontrivial, the sign of \(\epsilon\)
can be determined from the combinatorics of how the two arcs $Z_1$ and
$Z_2$ interact with each other, as illustrated in
Figure~\ref{fig:Squares}.  When $\epsilon=-1$, we call the pair {\em
  of Type $A$}; when $\epsilon=+1$, we call the pair {\em of Type
  $C$}. There are two remaining cases where the double-composites are
trivial.  We label these cases by $X$ and $Y$, as shown in
Figure~\ref{fig:Squares}.

\begin{figure}
\begin{center}
\mbox{\vbox{\epsfbox{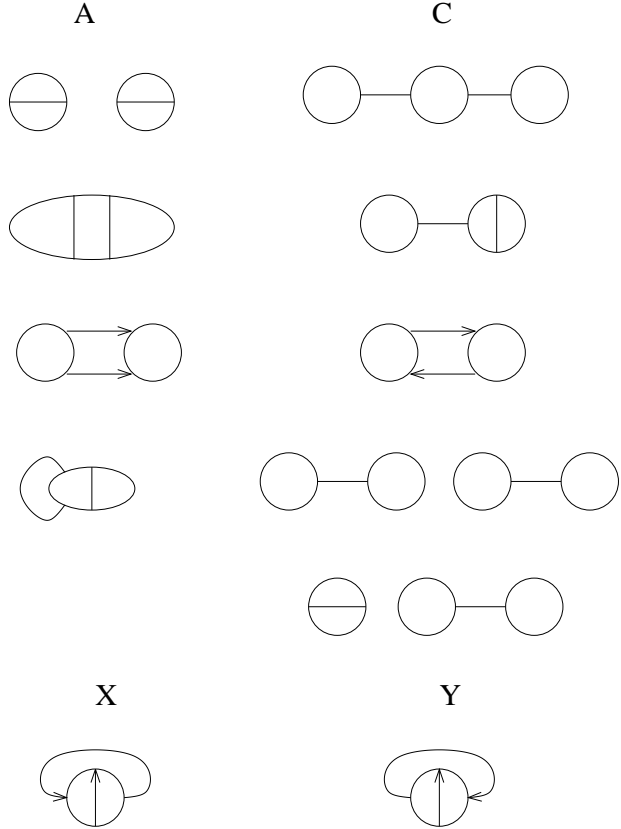}}}
\end{center}
\caption {{\bf Commutation chart.}
  The set of pairs of arcs can be placed into four categories, Types
  $A$, $C$, $X$, and $Y$. In Type $A$, the
  double-composite maps anti-commute. In Type $C$, they commute. The
  two  remaining cases are labelled $X$ and $Y$ as above.
  The thicker curves denote components of $S_1$, while the thinner arcs
  represent the one-handles specified by $Z$.  Orientations of these
  one-handles are specified by arrows when they are needed; when they
  are dropped, it is because the corresponding picture has the stated
  type for either choice of orientation.}
\label{fig:Squares}
\end{figure}

\begin{figure}
\begin{center}
\mbox{\vbox{\epsfbox{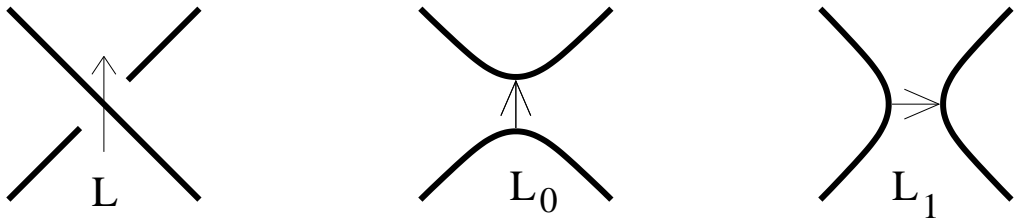}}}
\end{center}
\caption {{\bf Oriented skein moves.}
  The oriented 
crossing on the left has two oriented resolutions, as illustrated.
Note that there are two possible choices of
orientation at each crossing. (To see the other one, turn the figure
\(180^\circ\).)}
\label{fig:OrientedSkein}
\end{figure}

\subsection{The hypercube of resolutions}

Given a link projection, we fix an {\it orientation} on
it by
drawing an arrow at each crossing, as illustrated in
Figure~\ref{fig:OrientedSkein}. 
If \(\Diagram\) is  an oriented link diagram of
this form, we can define its associated
{\em hypercube of oriented resolutions}. Specifically, each crossing
in the projection has two resolutions  $L_0$ and
$L_1$. (These are the resolution conventions of~\cite{Khovanov}; 
they are opposite to those from ~\cite{BrDCov}.)
 The crossing can be thought of as giving a cobordism from $L_0$
to $L_1$ consisting of a single one-handle, as illustrated in
Figure~\ref{fig:OrientedSkein}. 
 Let $\Crossings$ denote the set
of crossings of the diagram $\Diagram$. Then for each map $I\colon
\Crossings \longrightarrow \{0,1\}$, we have an associated embedded
one-manifold in the plane, $\Diagram(I)$ which replaces a given
crossing $x\in \Crossings$ by its oriented resolution of type $I(x)$.

Given two vertices $I_{0}, I_{1}\colon \Crossings \longrightarrow
\{0,1\}$ in the hypercube, we say that there is an oriented edge from
$I_{0}$ to $I_{1}$ if there is some $x\in \Crossings$ with the
property that
\[
\begin{array}{ll}
I_{0}(x)=0        & I_{1}(x)=1 \\
I_{0}(y)=I_{1}(y) & {\text{if $x\neq y$}.} \\
\end{array}
\]
We let $\Edges(\Diagram)$ and \(\mathcal{V}(D)\) denote the set of
edges and vertices, respectively. Each 
vertex \(I\) 
 corresponds to an object \(\Diagram(I)\) in $\Cat$, and each edge in
$\Edges$ corresponds to a morphism.  Given $e\in\Edges$ from
$I_{0}$ to $I_{1}$, we let $M_{e}$ denote the corresponding morphism
from $\Diagram(I_{0})$ to $\Diagram(I_{1})$.

Two-dimensional faces, or squares, in the hypercube of oriented resolutions
correspond to pairs 
of resolutions
$I_{00},I_{11}\colon \Crossings \longrightarrow \{0,1\}$ for which there are exactly two 
$x_1, x_2\in \Crossings$ with 
\begin{eqnarray*}
I_{00}(x_1)=0,&  I_{11}(x_1)=1  \\
I_{00}(x_2)=0,& I_{11}(x_2)=1  \\
I_{00}(y)=I_{11}(y) & {\text{if $y\not\in\{x,x'\}$}} 
\end{eqnarray*}
Each square face may be classified as one of types  $A$, $C$, $X$, or $Y$,
 according to the
classification scheme in Figure~\ref{fig:Squares}.

\begin{definition}
        An {\em  edge assignment for a diagram $\Diagram$} is a
        map $\epsilon\colon \Edges(\Diagram)\longrightarrow \{\pm
        1\}$. Given an edge assignment \(\epsilon\), we say that a
        square face is {\em even} or {\em odd}, depending on whether it contains
        an even or odd number of edges with \(\epsilon(e)=-1\).
        A {\em type $X$ edge assignment} is an edge
        assignment with the property that all faces of type \(A\) and
        \(X\) are even and all faces of type \(C\) and \(Y\) are odd.
        Similarly, a {\em type $Y$ edge assignment} is an edge
        assignment for which faces of type \(A\) and
        \(Y\) are even and faces of type \(C\) and \(X\) are odd.

\end{definition}

The following lemma will be established in
Section~\ref{sec:EdgeAssignments}.

\begin{lemma}
        \label{lemma:ExistsAssignment}
        Any diagram $\Diagram$ has an edge assignment of type $X$, and
        one of type $Y$.  
\end{lemma}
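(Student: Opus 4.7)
The approach is cohomological. Identify $\{\pm 1\}$ additively with $\Field$ and view the hypercube of oriented resolutions as a cube complex $H$ whose $k$-cells are the $k$-dimensional faces of $\{0,1\}^{\Crossings}$, so that its $1$-cells are $\Edges(\Diagram)$ and its $2$-cells are the square faces. An edge assignment is then a cellular cochain $\epsilon \in C^1(H;\Field)$, and for a $2$-face $F$ the coboundary value $(d\epsilon)(F)\in\Field$ is precisely the parity of $F$ under $\epsilon$. Define $\sigma_X,\sigma_Y\in C^2(H;\Field)$ by $\sigma_X(F)=1$ iff $F$ has type $C$ or $Y$, and $\sigma_Y(F)=1$ iff $F$ has type $C$ or $X$. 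Then a type $X$ (resp.\ type $Y$) edge assignment is exactly a solution to $d\epsilon=\sigma_X$ (resp.\ $d\epsilon=\sigma_Y$).

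Since $H$ is contractible, $H^2(H;\Field)=0$, so every $\Field$-valued $2$-cocycle is a coboundary. Hence the lemma reduces to verifying that $\sigma_X$ and $\sigma_Y$ are cocycles; equivalently, for each $3$-dimensional subcube $C\subset H$,
\begin{equation*}
\sum_{F\subset C}\sigma_X(F)=\sum_{F\subset C}\sigma_Y(F)=0\in\Field.
\end{equation*}

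A $3$-subcube $C$ is specified by fixing the resolutions at all but three crossings, and is therefore encoded by a triple of disjoint oriented arcs $(Z_1,Z_2,Z_3)$ attached to a fixed planar $1$-manifold. The six $2$-faces of $C$ fall into three pairs indexed by the subsets $\{i,j\}\subset\{1,2,3\}$: the two faces in a given pair use the same arcs $\{Z_i,Z_j\}$ but differ according to whether the third arc $Z_k$ has been attached. If attaching $Z_k$ does not change the type of the $(Z_i,Z_j)$-face, then the two faces receive the same $\sigma_\star$-value and cancel mod $2$; consequently only those pairs in which $Z_k$ actually alters the component structure relevant to $Z_i$ and $Z_j$ can contribute.

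The main obstacle is the finite case analysis of $3$-cubes in which at least one such alteration occurs. Up to planar isotopy and the $S_3$-symmetry permuting $(Z_1,Z_2,Z_3)$, one enumerates these configurations and, in each case, reads off the types of the six $2$-faces directly from the commutation chart in Figure~\ref{fig:Squares}; the resulting six-term parities in $\Field$ must be shown to vanish for both $\sigma_X$ and $\sigma_Y$ simultaneously. This combinatorial verification, which is where the real work of the lemma lies, is what distinguishes the target prescriptions $\sigma_X$ and $\sigma_Y$ from other, \emph{a priori} plausible, parity assignments.
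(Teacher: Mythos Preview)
Your approach is essentially identical to the paper's: recast the existence of a type $X$ (resp.\ $Y$) edge assignment as solving $d\epsilon=\phi$ for a prescribed $\Field$-valued $2$-cochain on the contractible hypercube, reduce via $H^2=0$ to showing $\phi$ is a cocycle, and check this on each $3$-cube. The paper isolates the $3$-cube check as a separate lemma (Lemma~\ref{lemma:CountingInCube}) and actually carries out the enumeration of planar configurations of three arcs on up to four circles (Figure~\ref{fig:Cubes}, with the disconnected case handled by a pairing argument similar to the one you sketch); you correctly identify this verification as the crux but do not perform it, so what you have is a correct outline rather than a complete proof.
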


\subsection{The chain complex}
Consider the vector space
$$C(\Diagram)=\bigoplus_{I\in \mathcal{V(\Diagram)}} 
\Wedge^* V(\Diagram(I)).$$
Given a type $X$ or a type $Y$ edge assignment $\epsilon\colon
\Edges(\Diagram)\longrightarrow \{\pm 1\}$, we can define an
endomorphism \(\partial_\epsilon\) of $C(\Diagram)$ by the formula
$$\partial_\epsilon(v)
=\sum_{\{e\in\Edges(\Diagram), J\in \mathcal{V}(D)
\big| \text{$e$ goes from $I$ to $J$}\}} \epsilon(e)\cm F_e(v).$$
for \(v \in \Wedge^*V(\Diagram(I))\).

We claim that  $\partial_\epsilon^2=0$. Indeed, this relation 
is satisfied provided
  $$\epsilon(e_1)\epsilon(e_2) F_{e_1}\circ F_{e_2}
  +\epsilon(e_3)\epsilon(e_4) F_{e_3}\circ F_{e_4}=0$$
  whenever we
  have four edges $\{e_1,...,e_4\}$ bounding a square (so that
  $e_1e_2$ and $e_3e_4$ are the two paths
  from the initial point to the final point).  For a square of type
  $A$ or $C$, this holds by Equation~\eqref{eq:DefAC}, while for
  squares of type $X$ or $Y$, the relation holds since
  $$F_{e_1}\circ F_{e_2}=F_{e_3}\circ F_{e_4}=0.$$
Thus the pair \((C(\Diagram),\partial_{\epsilon})\) defines a chain complex.

  $C(\Diagram)$ can be 
equipped with a bigrading $C_{a,b}(\Diagram)$,
following~\cite{Khovanov}.  Specifically, we endow the exterior
algebra $\Wedge^* V(\Diagram(I))$ with the $q$-grading $Q_0$ for which
$\Wedge^r V(\Diagram(I))$ has $q$-grading equal to $\dim V(\Diagram(I))-2r$.
Similarly, we define the initial homological grading $M_0$ on
$C(\Diagram)$ so that $C(\Diagram(I))$ is supported in grading
$\sum_{c\in\Crossings} I(c)$.  The $q$-grading on $C(\Diagram)$ is
then given by $Q=Q_0+n_+-2n_-+M_0$, where here $n_-$ denotes the number of
negative crossings in the diagram, and the homological grading
$M=M_0-n_-$.  We write
$$C(\Diagram)=\bigoplus_{m,s\in\Z} C_{m,s}(\Diagram),$$
where $m$
corresponds to the homological grading and $s$ the $q$-grading. Since
the differential preserves $Q$-grading and drops $M$-grading by one,
the two gradings descend to homology, and we can write
$$H(\Diagram)=\bigoplus_{m,s\in\Z} H_{m,s}(\Diagram).$$

\begin{theorem}
  \label{thm:KnotInvariant}
  Let $L$ be a link. Fix an oriented projection $\Diagram$ of $L$ and an edge
  assignment $\epsilon$ of type $X$ or $Y$.  The bigraded homology
  groups of $(C(\Diagram),\partial_\epsilon)$ are independent of the
  choice of \(\Diagram\) and \(\epsilon\).
\end{theorem}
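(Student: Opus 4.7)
The plan is to follow Khovanov's original invariance strategy, adapted to the exterior-algebra setting. Three separate statements must be proved: first, independence from the edge assignment $\epsilon$ for a fixed diagram and fixed type; second, independence from the arrow orientations at crossings and at one-handles (and a comparison between type $X$ and type $Y$); and third, invariance under the three Reidemeister moves. The shifts $n_+-2n_-$ and $-n_-$ built into $M$ and $Q$ are arranged so that the grading corrections arising from R1 and R2 cancel automatically.

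\textbf{Independence from $\epsilon$ within a fixed type.} Suppose $\epsilon, \epsilon'$ are two type $X$ edge assignments, and set $\eta = \epsilon \cdot \epsilon' \colon \Edges(\Diagram) \to \{\pm 1\}$. The type $X$ condition on both forces $\prod_{e \in \partial F} \eta(e) = +1$ for every square face $F$, regardless of whether $F$ is of type $A, C, X$, or $Y$, since both $\epsilon$ and $\epsilon'$ contribute the same prescribed parity on $F$. Viewing $\{\pm 1\}$ additively as $\Z/2$, this says $\eta$ is a $1$-cocycle on the hypercube $\{0,1\}^{\Crossings}$. Because the hypercube is contractible, $\eta = \delta f$ for some $f \colon \mathcal{V}(\Diagram) \to \{\pm 1\}$, and then multiplying $\Wedge^* V(\Diagram(I))$ by $f(I)$ defines an isomorphism of chain complexes intertwining $\partial_\epsilon$ and $\partial_{\epsilon'}$. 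The type $Y$ case is identical. Flipping the arrow on a crossing or on a one-handle flips the sign of $F_e$ on the affected edges, and these sign changes can be absorbed into the edge assignment, reducing to the preceding paragraph.

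\textbf{Reidemeister invariance.} For each of R1, R2, R3 we construct an explicit chain homotopy equivalence between the two local complexes. The workhorse is the standard cancellation principle: if an edge $e$ from $I_0$ to $I_1$ has $F_e$ an isomorphism, the two-vertex subcomplex generated along $e$ is acyclic and may be cancelled (using a Gaussian-elimination style change of basis) without altering chain-homotopy type. For R1, cancelling one edge at the new crossing identifies the enlarged complex with the original one, up to the bigrading shift compensated by the $n_\pm$ correction. R2 is handled by cancelling a pair of isomorphism edges inside the $2\times 2$ cube associated to the two new crossings. R3 is then deduced, after performing R2-style cancellations on both sides, by writing down an explicit chain homotopy between the resulting reduced complexes, built from compositions of the merge and split maps.

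\textbf{Main obstacle.} The principal difficulty is R3, where several squares of different types from Figure~\ref{fig:Squares} meet simultaneously and signs must be tracked through both the cancellation and the homotopy. Lemma~\ref{lemma:ExistsAssignment} guarantees that type $X$ edge assignments exist on both local diagrams, while the cube-cohomology argument of the first step gives the freedom to adjust any such assignment by an arbitrary $0$-cochain. Combining these two facts to produce compatible assignments on the two sides of R3, and verifying that the resulting chain homotopy equivalence preserves the bigrading, is the heart of the proof.
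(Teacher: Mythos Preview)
Your outline matches the paper's strategy: independence from $\epsilon$ via the hypercube $H^1=0$ argument (the paper's Lemma~2.3), orientation-independence by absorbing sign changes into the edge assignment (Lemma~2.4), and Reidemeister invariance by cancelling acyclic pieces (Propositions~3.1--3.3). Two points deserve sharpening.

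First, the sentence ``flipping the arrow on a crossing flips the sign of $F_e$ on the affected edges, and these sign changes can be absorbed into the edge assignment'' is imprecise: the sign of $F_e$ changes only when $e$ is a \emph{split} at that crossing, and reversing an arrow can also switch a face between types $X$ and $Y$. The paper checks (Lemma~2.4) that the resulting cochain $\alpha$ satisfies $\phi' = \phi \cdot d\alpha$, so $\alpha\cdot\epsilon$ is again an assignment of the \emph{same} type for the new orientation. The separate comparison between type $X$ and type $Y$ is not obtained this way; the paper uses a global checkerboard coloring $\theta$ to produce an orientation change that swaps all $X$ faces with all $Y$ faces simultaneously, while preserving the $A/C$ classification. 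You list this step in your plan but do not supply an argument for it.

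Second, for R3 the paper does not construct a chain homotopy between the two reduced complexes. Instead it contracts an isomorphism edge on each side to reach a common five-term complex $P$ (Figure~\ref{fig:RIIIB}), and then checks that the obstruction two-cochains $\psi,\psi'\in C^2(\mathcal P;\Z/2)$ recording face types agree, so the induced sign assignments on $P$ differ by a coboundary and give isomorphic complexes. This reduces the R3 sign verification to the same cocycle mechanism used for $\epsilon$-independence, rather than an explicit homotopy.
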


We call the above bigraded homology groups the {\em odd Khovanov
  homology} of the link $L$, $\Kh(L)$, to distinguish it from ordinary
$sl(2)$ Khovanov homology $\oKh(L)$ (where the variables are
``even'').
 We collect here some properties  of \(\Kh(L)\) 
which follow quickly from its construction; proofs will
be supplied in Section~\ref{sec:Properties}.

Recall that the unnormalized
Jones polynomial is characterized by the properties that:
\begin{eqnarray*}
{\widehat J}(\emptyset)&=& 1 \\
{\widehat J}\Big({\text{(unknot)}}\cup L\Big) &
= & (q+q^{-1})\cm {\widehat J}(L) \\
{\widehat J}(\backoverslash) &\dot{=}& {\widehat J}(\hsmoothing) - q \cm {\widehat J}(\smoothing),
\end{eqnarray*}
 where for $f,g\in \Z[q,q^{-1}]$, we write $f\dot{=}g$ if
$f=\pm q^j\cdot g$ for some $j\in\Z$.

\begin{prop}
  \label{prop:EulerCharacteristic}
  $\Kh(L)$ categorifies the unnormalized Jones polynomial,
   in the sense that
  \begin{eqnarray*}
    {\widehat J}(L) = \sum_{m,s\in\Z} (-1)^m \rk (\Kh_{m,s}(L)) \cm q^s 
  \end{eqnarray*}
\end{prop}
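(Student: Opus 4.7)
The plan is to reduce to the standard state-sum expression for the unnormalized Jones polynomial, exactly as in Khovanov's original argument~\cite{Khovanov}. The only input particular to the odd construction is the computation of the $q$-graded rank of an exterior algebra, which happens to agree with the graded rank of the tensor algebra $\Z[X]/(X^2)^{\otimes n}$ used in ordinary Khovanov homology.

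First I would observe that, since $\partial_\epsilon$ preserves the $Q$-grading and drops the $M$-grading by $1$, the graded Euler characteristic of $H(\Diagram)$ equals that of the underlying chain complex $C(\Diagram)$:
\begin{equation*}
\sum_{m,s} (-1)^m \rk\bigl(\Kh_{m,s}(L)\bigr)\, q^s
\;=\; \sum_{m,s} (-1)^m \rk\bigl(C_{m,s}(\Diagram)\bigr)\, q^s.
\end{equation*}
Next I would compute the right-hand side vertex by vertex in the cube. For a vertex $I\in\mathcal V(\Diagram)$, let $n(I)=|\Diagram(I)|$ be the number of circles in the oriented resolution. Since the component of $\Wedge^*V(\Diagram(I))$ in exterior degree $r$ has rank $\binom{n(I)}{r}$ and initial $q$-grading $Q_0 = n(I)-2r$, its $Q_0$-graded rank is
\begin{equation*}
\sum_{r=0}^{n(I)}\binom{n(I)}{r}\,q^{n(I)-2r} \;=\; (q+q^{-1})^{n(I)}.
\end{equation*}
Incorporating the shifts $M=M_0-n_-=|I|-n_-$ and $Q=Q_0+M_0+n_+-2n_-$, the total graded Euler characteristic becomes
\begin{equation*}
(-1)^{n_-}\,q^{n_+-2n_-}\sum_{I\in\mathcal V(\Diagram)} (-q)^{|I|}\,(q+q^{-1})^{n(I)}.
\end{equation*}

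Finally I would identify this expression with $\widehat J(L)$. The cleanest way is to verify that this state sum satisfies the three characterizing properties of $\widehat J$: it gives $1$ on the empty diagram, is multiplicative under disjoint union with an unknot (since adding a disjoint unknot multiplies the state sum by $(q+q^{-1})$ and does not change $n_\pm$), and satisfies the unoriented skein relation up to a power of $q$. For the skein relation, I would fix a crossing $c$ of $\Diagram$ and split the sum over vertices according to $I(c)\in\{0,1\}$; the $I(c)=0$ term yields (after accounting for the $q^{n_+-2n_-}$ shift) the contribution of $\Diagram(\hsmoothing)$, and the $I(c)=1$ term yields $-q$ times the contribution of $\Diagram(\smoothing)$. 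The powers of $q$ arising from the change in $n_\pm$ between $\Diagram$ and its resolutions are absorbed into the $\dot=$ ambiguity in the skein relation.

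The only real step here is this last bookkeeping with grading shifts, and it is identical to the one in~\cite{Khovanov}; the reason the argument carries over verbatim is the coincidence of graded ranks $\dim_q\Wedge^*V = (q+q^{-1})^{\dim V} = \dim_q(\Z[X]/X^2)^{\otimes\dim V}$. I do not expect any serious obstacle: Theorem~\ref{thm:KnotInvariant} has already done the hard work of producing a well-defined bigraded homology, and the Euler characteristic is automatically a link invariant once one verifies it on the chain level.
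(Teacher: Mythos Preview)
Your argument is correct. The paper, however, takes a shorter route: it simply invokes Proposition~\ref{prop:ModTwoReduction} (the mod~$2$ reductions of the two chain complexes coincide) together with the already-known Euler characteristic formula for ordinary Khovanov homology. Since the chain groups are free abelian, their ranks equal the $\mathbb{F}_2$-dimensions of their mod~$2$ reductions, so the graded Euler characteristics of $C(\Diagram)$ and of Khovanov's complex agree on the nose, and the result follows immediately. Your approach is essentially the unpacked version of this: you observe directly that $\dim_q \Wedge^* V = (q+q^{-1})^{\dim V}$ matches the graded rank of $(\Z[X]/X^2)^{\otimes \dim V}$, and then redo the state-sum verification of the skein relation by hand rather than citing the Khovanov result. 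Both arguments rest on the same underlying coincidence of chain-level graded ranks; the paper's version is terser, while yours is more self-contained and does not logically require Proposition~\ref{prop:ModTwoReduction}.
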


Exactly as in Khovanov's original construction, the skein relation
characterizing the Jones polynomial is replaced by a {\em skein exact
  sequence}, cf.~\cite{Khovanov}:

\begin{prop}
  \label{prop:SkeinExactSequence}
  There is a long sequence
  $$\begin{CD}
    @>>>  \Kh(\hsmoothing) @>{i_*}>> \Kh(\backoverslash) @>{\pi_*}>>
    \Kh(\smoothing) @>{\partial}>> \Kh(\hsmoothing) @>>>
    \end{CD}.$$
\end{prop}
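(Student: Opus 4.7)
The plan is to mimic the derivation of the skein exact sequence in ordinary Khovanov homology. Fix a crossing $x$ of an oriented diagram $\Diagram$ representing $\backoverslash$, and let $\Diagram_i$ denote the diagram obtained from $\Diagram$ by $i$-resolving $x$; by the conventions preceding Figure~\ref{fig:OrientedSkein}, one of $\{\Diagram_0,\Diagram_1\}$ is a diagram for $\hsmoothing$ and the other for $\smoothing$. Decompose the vertex set $\mathcal V(\Diagram)$ according to the value of $I(x)\in\{0,1\}$, and let $C_i\subset C(\Diagram)$ be the sum of those $\Wedge^* V(\Diagram(I))$ with $I(x)=i$. Because every edge of $\Diagram$ flips exactly one coordinate from $0$ to $1$, the differential $\partial_\epsilon$ preserves $C_1$ and maps $C_0$ into $C_0\oplus C_1$; thus $C_1$ is a subcomplex of $C(\Diagram)$ and $C_0$ is the quotient.

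Next I would identify $(C_i,\partial_\epsilon|_{C_i})$ with the odd Khovanov complex $(C(\Diagram_i),\partial_{\epsilon_i})$ for the restricted edge assignment $\epsilon_i := \epsilon|_{\Edges(\Diagram_i)}$. The edges and square faces of $\Diagram_i$ are in natural bijection with those edges and faces of $\Diagram$ along which $I(x)$ is constantly $i$, and the classification of a face into types $A$, $C$, $X$, or $Y$ in Figure~\ref{fig:Squares} depends only on the local combinatorics of the two arcs attached to the underlying one-manifold, not on $x$. Hence $\epsilon_i$ is an edge assignment of the same type (type $X$ or type $Y$) as $\epsilon$, and the identification of complexes is tautological. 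Assembling these observations yields a short exact sequence of bigraded chain complexes
$$0 \longrightarrow C(\Diagram_1) \longrightarrow C(\Diagram) \longrightarrow C(\Diagram_0) \longrightarrow 0.$$

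Finally I would pass to the associated long exact sequence in homology and invoke the invariance Theorem~\ref{thm:KnotInvariant} to rewrite each $H(C(\Diagram_i))$ as the odd Khovanov homology of the link it represents, obtaining the claimed sequence. The step requiring the most care is the bookkeeping of bigrading shifts: because $\Diagram$, $\Diagram_0$, and $\Diagram_1$ have different crossing-sign counts $n_\pm$, the global gradings $M=M_0-n_-$ and $Q=Q_0+n_+-2n_-+M_0$ differ between the three complexes, and the maps $i_*$, $\pi_*$, and the connecting homomorphism acquire the standard Khovanov shifts. These shifts come out exactly as in~\cite{Khovanov}, since the bigrading conventions of Section~\ref{sec:Introduction} coincide with the conventions used there.
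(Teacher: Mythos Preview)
Your argument is correct and is essentially the same as the paper's: the paper simply observes that by construction $C(\Diagram)$ is the mapping cone of a chain map $C(\Diagram_0)\to C(\Diagram_1)$ and invokes the long exact sequence of a mapping cone, which is exactly the short exact sequence $0\to C(\Diagram_1)\to C(\Diagram)\to C(\Diagram_0)\to 0$ you wrote down. Your discussion of why the restricted edge assignments $\epsilon_i$ are of the correct type and of the grading shifts supplies detail the paper leaves implicit, but the route is the same.
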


The maps \(i_*\), \(\pi_*\), and \(\delta\) are all homogenous with
respect to the bigrading; for a precise statement of the grading
shifts, see {\it e.g.} \cite{Thin}, \cite{RasmussenSurvey}. 

\begin{prop}
  \label{prop:ModTwoReduction}
  The mod two reduction of $\Kh(L)$ agrees with the mod two reduction
  of Khovanov's $sl(2)$ homology $\Kh(L)$; i.e. 
  $$\Tor(\oKh_{m+1,s}(L),\Zmod{2})\oplus
  \left(\oKh_{m,s}(L)\otimes\Zmod{2}\right) \cong
  \Tor(\Kh_{m+1,s}(L),\Zmod{2})\oplus
  \left(\Kh_{m,s}(L)\otimes\Zmod{2}\right).$$
\end{prop}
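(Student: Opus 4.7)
The plan is to show that modulo $2$, the chain complex $(C(\Diagram),\partial_\epsilon)$ defining $\Kh(L)$ is canonically isomorphic as a bigraded complex to Khovanov's integral chain complex computing $\oKh(L)$; the stated decomposition then follows by applying the universal coefficient theorem to both integral homology theories and equating the two expressions for the common mod-$2$ homology.

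First I would identify the two TQFTs after reduction modulo $2$. To a disjoint union of $n$ circles, Khovanov's TQFT associates $V^{\otimes n}$ with $V = \Z\langle 1, X\rangle$, while the TQFT in this paper associates $\Wedge^* V(S)$; both are free abelian of rank $2^n$, with natural bases indexed by subsets of the set of circles (a decoration by $X$'s in the first model corresponding to the wedge of the selected generators in the second). Under this identification, I would check by direct calculation that the four elementary morphisms --- merge, split, birth, death --- induce the same $\mathbb F_2$-linear maps. The cases of merge, birth, and death are essentially immediate; for the split, the comparison uses the congruence $a_1 - a_2 \equiv a_1 + a_2 \pmod 2$ to match Khovanov's coproduct $\Delta(1) = 1\otimes X + X\otimes 1$, $\Delta(X) = X\otimes X$, and exploits that the sign ambiguity in $\Wedge^* V(S_1) \cong (a_1-a_2)\wedge \Wedge^* V(S_2)$ becomes trivial over $\mathbb F_2$.

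Next I would assemble this into a chain complex isomorphism. Both Khovanov's differential and $\partial_\epsilon$ take the form $\sum_e \pm F_e$ summed over outgoing hypercube edges, with signs chosen purely to force $\partial^2 = 0$ over $\Z$. After reducing modulo $2$ all signs collapse to $1$, so both differentials reduce to the same unsigned sum $\sum_e F_e$. Since the bigradings $(m,s)$ are defined by the same formulas in both theories, this produces an isomorphism of bigraded $\mathbb F_2$-chain complexes, and hence an isomorphism of their homologies. A final application of the universal coefficient theorem $H^m(C;\mathbb F_2) \cong (H^m(C)\otimes \mathbb F_2) \oplus \Tor(H^{m+1}(C),\mathbb F_2)$ to each of the integral complexes then yields the claimed formula. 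The only step that requires any real care is the comparison of the split map in the first paragraph; once the two bases have been matched the rest is essentially forced by the structure of the two Frobenius algebras.
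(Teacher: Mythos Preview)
Your proposal is correct and follows essentially the same route as the paper's proof: identify the two TQFTs modulo~$2$ (so that $\mathrm{CKh}(\Diagram)\otimes\Zmod{2}\cong C(\Diagram)\otimes\Zmod{2}$ as bigraded chain complexes), and then invoke the universal coefficient theorem. The paper states this in two sentences; your version simply unpacks the TQFT comparison in more detail, particularly the split map and the collapse of the edge signs, which is entirely appropriate.
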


Despite their  formal similarities, \(\Kh\) and \(\oKh\) are actually
very different groups. The first indication of this fact is given by 
\begin{prop}
  \label{prop:Reduction}
  There is a bigraded Abelian group $\rKh(L)$ with the property that
  $$\Kh_{m,s}(L)\cong \rKh_{m,s-1}\oplus \rKh_{m,s+1}(L).$$
\end{prop}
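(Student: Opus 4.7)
The plan is to build a subcomplex $\rC(\Diagram)\subset C(\Diagram)$ with $C(\Diagram)\cong\rC(\Diagram)\oplus\rC(\Diagram)[-2]$ as chain complexes (where $[-2]$ is a shift of $q$-grading by $-2$); the proposition then follows on homology by setting $\rKh_{m,s}(L):=H_{m,s+1}(\rC(\Diagram))$.

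Fix a marked point $p$ on $L$; at each vertex $I$ of the hypercube this distinguishes a generator $p(I)\in V(\Diagram(I))$ of the marked component. The key choice is to take for $V'(\Diagram(I))$ the \emph{sum-zero subspace} $\{\sum_c\lambda_c\,c:\sum_c\lambda_c=0\}\subset V(\Diagram(I))$. Since $p(I)$ has coefficient-sum $1$, this is a complement of $\langle p(I)\rangle$, and crucially it does not depend on the choice of marked circle, which is what will let me globalize. I claim $\rC(\Diagram):=\bigoplus_I\Wedge^*V'(\Diagram(I))$ is a subcomplex. For a merge edge $e\colon I\to J$, the linear map $V(\Diagram(I))\to V(\Diagram(J))$ preserves coefficient-sum (both $a_1,a_2\mapsto a$, each of sum $1$), so $F_e$ restricts to $\Wedge^*V'(\Diagram(I))\to\Wedge^*V'(\Diagram(J))$. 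For a split edge, $F_e(v)=(a_1-a_2)\wedge\tilde v$; the factor $a_1-a_2$ has sum zero, the canonical lift $\tilde v$ of a sum-zero element stays sum-zero, so again $F_e$ sends $\Wedge^*V'$ into $\Wedge^*V'$.

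Using the vertex-wise decomposition $\Wedge^*V(\Diagram(I))=\Wedge^*V'(\Diagram(I))\oplus p(I)\wedge\Wedge^*V'(\Diagram(I))$, I split $C(\Diagram)$. A parallel case check---the subtle case being a split on the marked circle, where one uses the identity $a_1\wedge a_2=p(J)\wedge(a_2-a_1)$ with $a_2-a_1\in V'(\Diagram(J))$---shows $p(I)\wedge\Wedge^*V'(\Diagram(I))$ is also a subcomplex. Hence $C(\Diagram)=\rC(\Diagram)\oplus p\cdot\rC(\Diagram)$ as chain complexes. The vertex-wise map $w\mapsto c_I\,p(I)\wedge w$ is a chain isomorphism $\rC(\Diagram)\to p\cdot\rC(\Diagram)$ of $q$-degree $-2$ (since $p(I)$ has $q$-degree $-2$) provided the signs $c_I\in\{\pm 1\}$ satisfy $c_J=c_I$ on merge edges and $c_J=-c_I$ on split edges; such a consistent global choice exists because each $2$-face of the hypercube contains an even number of split edges, the total circle count being forced to return to its start around any closed loop.

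I expect the main work to lie in the case-by-case verification that both $\rC(\Diagram)$ and $p\cdot\rC(\Diagram)$ are preserved by $\partial_\epsilon$, particularly tracking signs in the split case when the marked point lies on the splitting circle. The sum-zero definition of $V'(\Diagram(I))$ is designed precisely so these checks go through uniformly across the four merge/split-times-marked/unmarked local pictures.
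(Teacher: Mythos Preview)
Your argument is correct and uses the same two building blocks as the paper: the sum-zero subcomplex $\rC(\Diagram)=\bigoplus_I\Wedge^*V'(\Diagram(I))$ and the marked-point subcomplex $a_p\wedge C(\Diagram)$ (which the paper calls $\rrC(\Diagram)$). Where you diverge is in how you assemble these into the splitting. You verify directly that both summands are preserved by $\partial_\epsilon$ and then build the sign-twisted chain isomorphism $w\mapsto c_I\,p(I)\wedge w$, using the parity argument on splits in each $2$-face to produce the $c_I$. The paper instead avoids checking the direct-sum decomposition of $C(\Diagram)$ by a trick: adjoin a disjoint unknot $O$, observe that $\rrC(\Diagram\cup O)$ computed with basepoint on $O$ is canonically $C(\Diagram)$, while with basepoint on $\Diagram$ it is $\rrC(\Diagram)\oplus\rrC(\Diagram)$; basepoint-independence (via $\rrC\cong\rC$) then gives $C(\Diagram)\cong\rC(\Diagram)\oplus\rC(\Diagram)$. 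Your route is more hands-on but has the virtue of making the signs explicit---in particular, you correctly identify that plain wedge with $a_p$ anticommutes with $F_e$ on split edges and must be corrected by the cocycle $c_I$, a point the paper's ``straightforward'' lemma glosses over. The paper's route is slicker in that the unknot trick sidesteps the case analysis for the second summand entirely.
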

We call $\rKh(L)$ the {\em reduced odd Khovanov homology}.  The
bigraded group $\rKh(L)$ categorifies the ordinary Jones polynomial
\(J(L)\), which is defined by the relation $(q+q^{-1})\cm
J(L)={\widehat J}(L).$ It analogous to the reduced Khovanov homology
$\roKh$ defined in \cite{ReducedHomology} which also categorifies the
ordinary Jones polynomial, but there are some differences.  In the
definition of the ordinary reduced Khovanov homology, one fixes a
component of $L$; different choices of component can lead to different
answers (which can be seen, for example, by considering the disjoint
union of the trefoil and an unknot).  By contrast, $\rKh$ is a link
invariant.  Moreover, the relation between $\rKh$ and $\Kh$ is
simpler than the relation between $\roKh$ and $\oKh$.
                     
For small knots \(K\), the groups \(\rKh(K)\) and \(\roKh(K)\) are
isomorphic. Indeed, we have
\begin{prop}
If \(L\) is a non-split alternating link, then \(\rKh(L) \cong \roKh(L)\).
\end{prop}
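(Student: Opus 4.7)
\medskip

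\noindent\textbf{Proof proposal.} The plan is to adapt to the odd setting the proof that the (reduced) even Khovanov homology of a non-split alternating link is \emph{thin}, i.e.\ supported on a single diagonal $\delta = s-2m$, and free over $\Z$. This is the strategy used by Lee for $\oKh$ and by the subsequent reduced version for $\roKh$. The key input from the odd theory is that $\rKh$ enjoys exactly the same formal properties as $\roKh$: (i) a skein long exact sequence (Proposition~\ref{prop:SkeinExactSequence}) with the same grading shifts, (ii) graded Euler characteristic equal to the reduced Jones polynomial $J(L)$, and (iii) agreement with $\roKh$ after reduction mod $2$ (Proposition~\ref{prop:ModTwoReduction}, which descends to the reduced theories). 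Once thinness is established, $\rKh(L)$ and $\roKh(L)$ are both free bigraded abelian groups supported on a single diagonal, with the same graded Euler characteristic, hence isomorphic.

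\medskip

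\noindent\textbf{Step 1: set up the induction.} Fix a reduced alternating diagram $\Diagram$ of $L$. By a standard argument (essentially Kauffman/Murasugi), any crossing $c$ of $\Diagram$ has the property that the two smoothings $\Diagram_0$ and $\Diagram_1$ are alternating diagrams of non-split alternating links (or split, handled separately) with strictly fewer crossings, after removing any nugatory crossings that appear. Base cases (unknot, two-component unlink once reduced away) are verified by direct computation: $\rKh(\text{unknot}) = \Z$ in bidegree $(0,0)$.

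\medskip

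\noindent\textbf{Step 2: propagate thinness through the skein sequence.} Apply Proposition~\ref{prop:SkeinExactSequence} in its reduced form at the chosen crossing. By inductive hypothesis, $\rKh(\Diagram_0)$ and $\rKh(\Diagram_1)$ are free abelian, concentrated in $\delta$-gradings equal to the signatures $\sigma(\Diagram_0)$ and $\sigma(\Diagram_1)$. The grading shifts entering the long exact sequence are identical to those in the even case, so the two nontrivial groups in each exact triangle sit on adjacent $\delta$-diagonals of $\rKh(L)$. To rule out torsion and off-diagonal classes, compare the resulting alternating sum of ranks with the reduced Jones polynomial coefficients, using the classical theorem of Bae–Morton/Murasugi that $J(L)$ of a non-split alternating link has coefficients alternating in sign with absolute values that match the ``thin'' prediction $|\det(L)|$-style bound. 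This forces the connecting maps to have maximal rank in each degree, eliminates extensions, and shows $\rKh(L)$ is free on a single diagonal, in $\delta = \sigma(L)$. This is the step I expect to be the main obstacle: a careful bookkeeping argument is needed to check that no $\Z/p$-torsion for $p$ odd can be created in the long exact sequence — this uses the mod~$p$ reduction of the inductive hypothesis together with the observation that tensoring the skein sequence with $\Z/p$ preserves exactness.

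\medskip

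\noindent\textbf{Step 3: conclude.} By Step~2, $\rKh(L)$ is a free bigraded abelian group, supported on the diagonal $\delta = \sigma(L)$, with graded Euler characteristic $J(L)$. The same three properties hold for $\roKh(L)$ by the Lee/Manolescu–Ozsv\'ath theorem on thinness of alternating links. A thin, free, bigraded $\Z$-module with prescribed single-diagonal support and prescribed Euler characteristic is determined up to bigraded isomorphism by $J(L)$ and $\sigma(L)$: its rank in bidegree $(m,s)$ with $s-2m = \sigma(L)$ equals $|a_s|$, where $J(L) = \sum a_s q^s$, and is zero otherwise. Therefore $\rKh(L) \cong \roKh(L)$ as bigraded abelian groups, completing the proof.
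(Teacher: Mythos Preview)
Your overall strategy matches the paper's: show that $\rKh(L)$ is $\sigma$-thin by the same inductive skein-sequence argument used for $\roKh$, then observe that a $\sigma$-thin theory with prescribed Euler characteristic is determined up to isomorphism, so $\rKh(L)\cong\roKh(L)$. The paper's proof is essentially the one-line assertion that ``the standard proof goes through without change,'' followed by exactly your Step~3.

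However, your Step~2 misdescribes the mechanism of that standard induction. In the Manolescu--Ozsv\'ath argument for (quasi-)alternating links, after applying the grading shifts in the reduced skein triangle, the inductively-thin groups $\rKh(L_0)$ and $\rKh(L_1)$ land on the \emph{same} $\delta$-diagonal --- this is the content of the Gordon--Litherland signature computation for the two resolutions of a crossing in an alternating diagram --- not on adjacent ones. The connecting homomorphism then has the wrong $\delta$-degree and therefore \emph{vanishes}; it does not have ``maximal rank.'' The long exact sequence collapses to a short exact sequence with free quotient (by the inductive hypothesis on $L_1$), which splits, giving thinness and freeness of $\rKh(L)$ immediately. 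No appeal to the alternating-sign property of the Jones coefficients, and no separate mod~$p$ argument to exclude odd torsion, is needed: freeness comes directly from the splitting. Once you correct this, your sketch and the paper's agree.
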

\noindent However, there are many nonalternating knots for which the two groups
are not isomorphic. 

In~\cite{BrDCov}, it is shown that there is a spectral sequence whose
$E_2$ term is Khovanov homology of a link $L\subset S^3$, with
coefficients taken in $\Zmod{2}$, and which converges to the Heegaard
Floer homology ${\widehat{\mathrm{HF}}}$ of the branched double-cover of $L$. Our
motivation for finding odd Khovanov homology came from our attempts to
lift this to a result over $\Z$; and consequently, it is natural to make
the following:

\begin{conjecture}
  Let $L\subset S^3$ be a link.  There is a spectral sequence whose
  $E_2$ term is the reduced odd Khovanov homology of $L$ and whose
  $E_\infty$ term is the Heegaard Floer homology of the branched
  double-cover of $L$ (with coefficients in $\Z$).
\end{conjecture}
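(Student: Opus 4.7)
The plan is to lift the Ozsv\'ath--Szab\'o construction of \cite{BrDCov} from $\Field$ to $\Z$ coefficients, and identify the resulting $E_2$ page with $\rKh(L)$. The starting observation is structural: for each vertex $I$ of the cube of resolutions, the branched double cover $\Sigma(\Diagram(I))$ is a connect sum of copies of $S^1\times S^2$, and $\widehat{\mathrm{HF}}(\Sigma(\Diagram(I));\Z)$ is canonically the exterior algebra $\Wedge^* H_1(\Sigma(\Diagram(I));\Z)$. Moreover, $H_1(\Sigma(\Diagram(I));\Z)$ is naturally identified with the quotient of $V(\Diagram(I))$ by the sum of the components, which is precisely the vector space underlying a single summand of the reduced complex $\rC$. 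So at the level of groups, the $E_1$ page will match $\rC(\Diagram)$ on the nose.

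First I would build a multi-pointed Heegaard diagram for a surgery link presentation of $\Sigma(L)$ as in \cite{BrDCov}, organized so that the cube filtration by ``how many of the surgery curves have been done'' recovers, vertex by vertex, the diagrams for the $\Sigma(\Diagram(I))$. Second, I would analyze the edge maps: each edge of the cube corresponds to a single one-handle addition, which on the Heegaard Floer side is realized by a holomorphic triangle map for a $1$- or $3$-handle attachment. Over $\Field$ these maps are precisely the merge/split maps of the TQFT; the task over $\Z$ is to choose coherent orientations on the moduli spaces so that, after sign-tweaking by an edge assignment $\epsilon\colon \Edges(\Diagram)\to\{\pm 1\}$, the resulting map agrees with $\epsilon(e)\cdot F_e$ up to global signs.

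The main obstacle, by far, is the sign/orientation analysis on two-dimensional faces of the hypercube. For each square one must compute how the two composite triangle maps interact, and verify that the resulting sign matches the Type $A$/$C$/$X$/$Y$ classification of Figure~\ref{fig:Squares}: that is, the composites should genuinely anticommute (resp.\ commute) on $A$ (resp.\ $C$) squares, and the double composites should vanish identically on $X$ and $Y$ squares. The Type $X$ and $Y$ vanishings should come from a degeneration argument for triangle counts (a suitable handleslide/cancellation of the two surgery curves makes the relevant Heegaard triple diagram inadmissible for nontrivial domains), while the $A$ versus $C$ dichotomy should come from tracking the orientation of the determinant line under the two orderings of the handle additions. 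Having established this, Lemma~\ref{lemma:ExistsAssignment} supplies an edge assignment $\epsilon$ that turns this commutation data into a genuine anticommuting cube, exactly as in the construction of $\partial_\epsilon$.

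Third, once the cube of Heegaard Floer groups is identified with $\rC(\Diagram)$ as a cube of chain complexes, the cube filtration on the total complex $\widehat{\mathrm{CF}}(\Sigma(L);\Z)$ gives a spectral sequence with $E_1$ the direct sum of the vertex groups and $d_1$ the edge maps, so $E_2\cong \rKh(L;\Z)$. Convergence to $\widehat{\mathrm{HF}}(\Sigma(L);\Z)$ is automatic from the finiteness of the filtration. The bigrading on $\rKh$ should match, up to an overall shift, the filtration grading together with the relative $\Z/2$-grading on Heegaard Floer homology; checking this is a routine bookkeeping exercise parallel to \cite{BrDCov}. The genuine mathematical content, and the reason this remains a conjecture rather than a theorem, is entirely the coherent orientation problem on the hypercube.
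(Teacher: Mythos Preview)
The statement you are addressing is labeled a \emph{conjecture} in the paper, and the paper offers no proof of it; there is nothing to compare your proposal against. The authors state the conjecture as motivation for the construction of odd Khovanov homology and move on.

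Your write-up is not a proof but a proof \emph{strategy}, and you say as much in your final sentence. The outline you give --- identify $\widehat{\mathrm{HF}}(\Sigma(\Diagram(I));\Z)$ with the exterior algebra at each vertex, identify edge maps with the projective TQFT maps $F_e$, analyze the sign behavior on two-dimensional faces to match the $A/C/X/Y$ classification, and then invoke Lemma~\ref{lemma:ExistsAssignment} to find a compatible edge assignment --- is the natural approach suggested by the paper's own framing, and is indeed the shape of later work in this direction. But the step you flag as ``the main obstacle, by far'' is exactly the content of the conjecture: establishing that coherent orientations on the holomorphic triangle moduli spaces over the hypercube exist and reproduce the commutation pattern of Figure~\ref{fig:Squares}. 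Your proposal asserts that this ``should come from'' determinant-line and degeneration arguments without carrying them out. That is the gap, and it is not a small one; until it is filled, what you have written is a plausible plan rather than a proof.
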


A similar result should hold, with a suitable construction from
Seiberg-Witten monopole Floer homology~\cite{KMbook} replacing the
Heegaard Floer homology of the branched double-cover.

This paper is organized as follows. In
Section~\ref{sec:EdgeAssignments} we construct the edge assignments
needed to construct the chain complex
(Lemmas~\ref{lemma:ExistsAssignment}), and show that the isomorphism
class of the complex is independent of the choice of sign assignment
and the orientations.  In Section~\ref{sec:Invariance}, we show that
odd Khovanov homology is independent of the link projection. The
arguments here follow closely the invariance proof of Khovanov's
$sl(2)$ theory (see ~\cite{Khovanov}, see also~\cite{BarNatan},
\cite{KhovanovRozansky}, ~\cite{KhovanovRozanskyII}). In
Section~\ref{sec:Properties}, we establish the basic properties of
this construction enumerated above.  Finally, in
Section~\ref{sec:Calculations}, we exhibit some calculations of these
groups.

We wish to thank Mikhail Khovanov, Tomasz Mrowka, and Paul Seidel for
their encouragement during the preparation of this manuscript.

\section{Existence and uniqueness of edge assignments}
\label{sec:EdgeAssignments}

Our first goal in this section is to prove 
Lemma~\ref{lemma:ExistsAssignment},
which allows us to construct the chain complex for odd Khovanov
homology. We then make some preliminary steps towards the proof of
Theorem~\ref{thm:KnotInvariant} by showing that the isomorphism type
of the complex \((C(\Diagram),\partial_{\epsilon})\) does not depend 
 on \(\epsilon\) or on the choice of orientation at the crossings.

Lemma~\ref{lemma:ExistsAssignment} will follow quickly from the
following lemma about cubes in the hypercube of oriented resolutions;
but before stating this lemma, we note that a cube in the hypercube of
resolutions is determined by a pair of resolutions
$$I_{000},I_{111}\colon \Crossings \longrightarrow \{0,1\}$$ with the
property that there are three crossings $x_1,x_2,x_3\in\Crossings$ 
such that
\[\begin{array}{lll}
I_{000}(x_i)=0, & I_{111}(x_i)=1 & {\text{for $i=1,2,3$}} \\
I_{000}(y)=I_{111}(y)& & {\text{if $y\not\in\{x_1,x_2,x_3\}$}}.
\end{array}
\]

\begin{lemma}
  \label{lemma:CountingInCube}
  Each cube in the hypercube of resolutions contains an even number of
  squares of type $A$ and $X$.  Similarly, each cube contains an even
  number of squares of type $A$ and $Y$.
\end{lemma}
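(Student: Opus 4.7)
The plan is to unpack the cube in terms of the three arcs $Z_1, Z_2, Z_3$ representing the one-handles at the three crossings defining it, then perform a finite case analysis on the possible configurations. A cube is determined by a base vertex $I_{000}$, and the three arcs sit inside the planar one-manifold $\Diagram(I_{000})$. Its six faces come in three natural pairs, one for each coordinate $k \in \{1,2,3\}$: the face on which $I(x_k) = 0$ is a square involving $Z_i$ and $Z_j$ (with $\{i,j,k\} = \{1,2,3\}$) read off inside $\Diagram(I_{000})$, whereas the face on which $I(x_k) = 1$ is the corresponding square read off inside the one-manifold obtained from $\Diagram(I_{000})$ by surgering along $Z_k$.

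The next step is to describe the type of a face in terms of combinatorial data for the pair $(Z_i, Z_j)$: which of their four endpoints share circles, the cyclic order of the shared endpoints along each host circle, and the orientations of the arcs. Because only the picture local to the arcs matters, only finitely many equivalence classes of pairs occur, and each matches one of the types $A$, $C$, $X$, $Y$ from Figure~\ref{fig:Squares}. To close the argument, one observes that the surgery along $Z_k$ affects the pair $(Z_i, Z_j)$ only by rerouting the circles hosting its endpoints (merging two of them or splitting one), so the type of the $(Z_i, Z_j)$-face changes in a predictable, purely local way. Writing $a_{ij} \in \Z/2$ for the indicator that this type switches between $\{A, X\}$ and $\{C, Y\}$ under the $Z_k$-surgery, the two desired parity statements both reduce to $a_{12} + a_{13} + a_{23} \equiv 0 \pmod 2$, which can then be verified configuration by configuration.

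The main obstacle is the bookkeeping intrinsic to the case analysis: one must carefully translate the local pictures of Figure~\ref{fig:Squares} into concrete combinatorial invariants, and then track how these invariants change under each of the three surgeries, keeping careful track of orientations along the way. A useful simplification that keeps the list of cases small is the observation that if no circle of the ambient one-manifold meets both $Z_i$ and $Z_j$, the corresponding face is automatically of type $C$ regardless of the third arc; this kills the contributions of non-interacting pairs and reduces the verification to a handful of genuinely interacting configurations, which can then be checked by inspection.
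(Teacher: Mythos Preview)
Your approach is essentially the same as the paper's: both reduce the lemma to a finite case analysis of the possible configurations of three oriented arcs on a planar one-manifold, enumerating the connected diagrams directly and treating the disconnected case by a separate (easier) argument.

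One small correction, though: with $a_{ij}$ defined as the indicator of an $\{A,X\}\leftrightarrow\{C,Y\}$ switch under the $Z_k$-surgery, only the $\{A,X\}$-parity reduces to $a_{12}+a_{13}+a_{23}\equiv 0$. The $\{A,Y\}$-parity instead reduces to the analogous sum of indicators for $\{A,Y\}\leftrightarrow\{C,X\}$ switches, and these two indicators differ exactly when surgery moves the $(Z_i,Z_j)$-face between $\{A,C\}$ and $\{X,Y\}$---which does occur (e.g.\ when $Z_k$ merges two previously disjoint circles carrying $Z_i$ and $Z_j$). This does not undermine your strategy: the paper simply verifies both parities in the same case-by-case check, and your enumeration would do the same.
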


\begin{proof}
  This is a case-by-case analysis according to the different possible
  combinatorial types of cubes in the hypercube of resolutions.
  Specifically, a cube corresponds to eight resolutions, which are
  given a partial ordering, with a unique minimal element.  The cube
  is determined by this minimal element ($I_{000}$) and the three
  oriented arcs connecting various components (corresponding to three
  crossings in the original projection). We disregard all the
  unknotted circles which do not meet these three arcs. This leaves us
  between one and four circles, which are connected by the oriented
  arcs (in the plane). We enumerate the possible connected diagrams in
  Figure~\ref{fig:Cubes}.

\begin{figure}
\begin{center}
\mbox{\vbox{\epsfbox{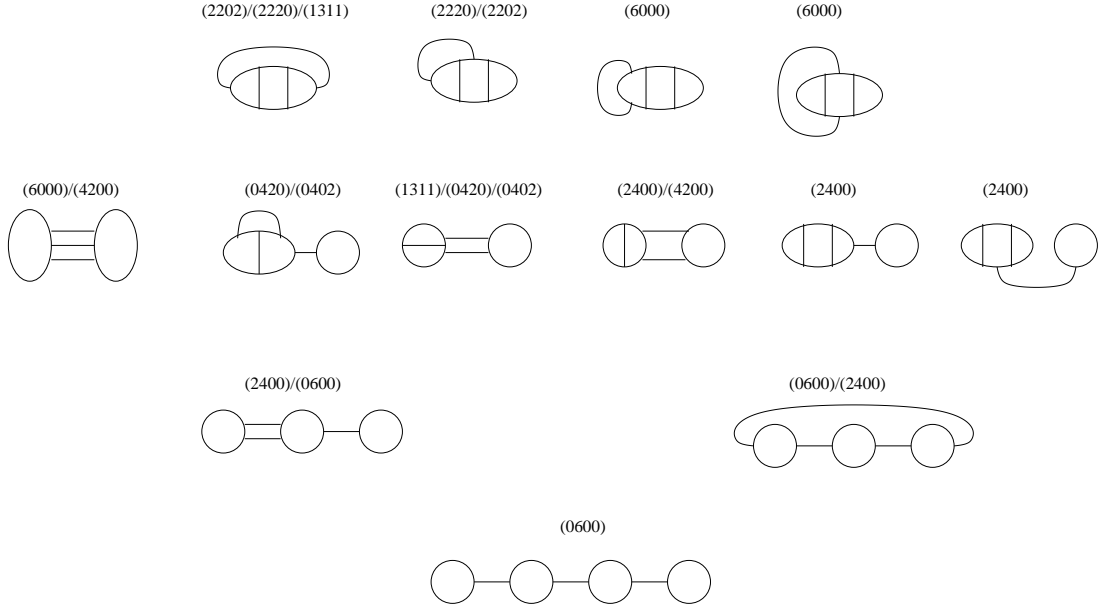}}}
\end{center}
\caption {{\bf Cubes.}
        Different types of cubes in the hypercube of resolutions.
        Each cube is labelled with choices
        of distributions of faces of type $A$, $C$, $X$, and $Y$.
        The different possibilities correspond to various orientations
        on the arcs connecting them.}
\label{fig:Cubes}
\end{figure}

        Let $a$, $c$, $x$, and $y$ denote the number of squares of
        types $A$, $C$, $X$ and $Y$ respectively in each cube. We
        claim that  both $a+x$ and $a+y$ are even.  This,
        too, is an easy verification. Note that the six squares in each
        cube are realized by choosing one of the three arcs, and
        either dropping it, or performing surgery along it.  For
        example, in Figure~\ref{fig:Example}, we have pictured the
        possibilities for the second type of cube appearing in
        Figure~\ref{fig:Cubes}, with one of the eight different possible
        choices of orientations. We see that there are two squares of
        type $C$, two of type \(A\), and two of type \(Y\). 
        The number of squares of types \(C\), \(A\), \(X\), and \(Y\)
        in the other cases is indicated in figure
        ~\ref{fig:Cubes}. We leave it to the reader to verify that
        in all cases, \(a+x\) and \(a+y\) are even. 

  If the diagram is disconnected, it has a component with only one
  arc. We orient the cube so that the four edges corresponding
  to this arc are vertical; then the top and bottom faces are of the same
  type. If the vertical edges correspond to  merges, all four vertical
  faces are of type \(C\). If they are splits, each merge in the top
  face corresponds to a vertical face of type \(C\), and each split
  corresponds to a vertical face of type \(A\). The number of merges
  and splits in the top face are both even, so the claim holds in this
  case as well.

\begin{figure}
\begin{center}
\mbox{\vbox{\epsfbox{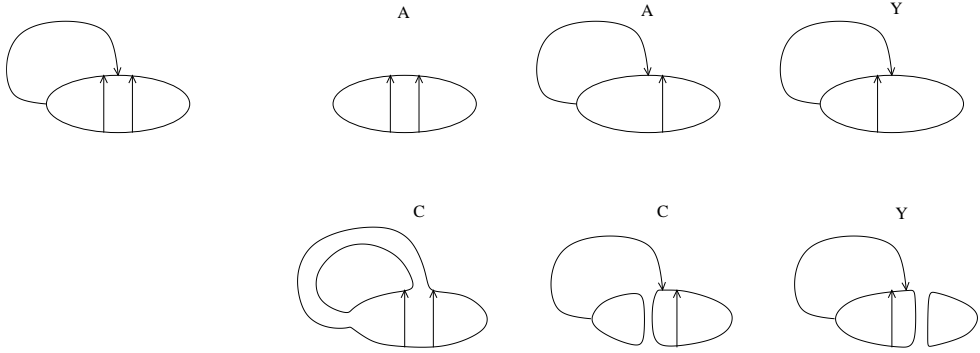}}}
\end{center}
\caption {{\bf Counting types of squares in a cube.}
Consider the cube corresponding to the diagram on the left-hand side
of the figure. 
The six square faces of the cube are gotten by
either forgetting or  performing surgery along one of the arcs. 
These are shown and classified on the right.}
\label{fig:Example}
\end{figure}

\end{proof}

\begin{proof}[Proof of Lemma~\ref{lemma:ExistsAssignment}]
 Let \(G=\Z^* \cong \Z/2\) be the multiplicative group with elements
 \(\pm1\). 
  We consider the hypercube $Q$ of oriented resolutions as a
  simplicial complex. We can define on it a  $2$-cochain  $\phi\in
  C^2(Q;G)$  which
  associates to each face of type $A$ or $X$ the element
  $1\in G$, and to each face of type $C$ or $Y$ the
  number $-1\in G$.
  Lemma~\ref{lemma:CountingInCube} shows that $\phi$ is a cocycle.
  Since the cube is contractible, $\phi$ must be a coboundary.
  Concretely, this means that there is some function $\epsilon\colon
  \Edges\longrightarrow \{\pm 1\}$ with the property that
  $\epsilon(e_1)\epsilon(e_2)\epsilon(e_3)\epsilon(e_4)=\phi(\sigma)$
  where $e_1,...,e_4$ are the four edges of the square $\sigma$. This
  is the required edge assignment of type $X$. The same remarks
  hold for constructing an edge assignment of type $Y$.
\end{proof}

\begin{lemma}
  \label{lemma:Uniqueness}
 If $\epsilon$ and $\epsilon'$ are two edge assignments of the same
  type (\(X\) or \(Y\)), then the chain complex
  $(C(\Diagram),\partial_\epsilon)$ is isomorphic to
  $(C(\Diagram),\partial_{\epsilon'})$.
\end{lemma}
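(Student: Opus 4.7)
The plan is to construct an explicit chain isomorphism \(\Phi\colon(C(\Diagram),\partial_\epsilon)\to(C(\Diagram),\partial_{\epsilon'})\) that acts on the summand indexed by each vertex \(I\) as multiplication by a sign \(\mu(I)\in\{\pm 1\}\). The key observation will be that the product \(\eta(e):=\epsilon(e)\epsilon'(e)\), viewed as a multiplicative \(1\)-cochain on the hypercube \(Q\) of oriented resolutions with coefficients in \(G=\{\pm 1\}\), is a \(1\)-cocycle. Indeed, since \(\epsilon\) and \(\epsilon'\) are edge assignments of the same type, the ``curvature'' \(\phi(\sigma)\in\{\pm 1\}\) (defined as in the proof of Lemma~\ref{lemma:ExistsAssignment}) of any square face \(\sigma\) is the same for both, so the product of \(\eta\) around the four edges of \(\sigma\) is \(\phi(\sigma)^2=1\).

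Since the hypercube \(Q\) is contractible, we have \(H^1(Q;G)=0\), so the cocycle \(\eta\) is necessarily a coboundary: there exists a function \(\mu\colon\mathcal V(\Diagram)\to\{\pm 1\}\) satisfying \(\eta(e)=\mu(I)\mu(J)\) for every edge \(e\in\Edges(\Diagram)\) from \(I\) to \(J\). I would then define \(\Phi\) to act as multiplication by \(\mu(I)\) on each \(\Wedge^*V(\Diagram(I))\). Since the sign depends only on the vertex and not on the exterior power, \(\Phi\) is manifestly a bigraded automorphism of the underlying abelian group.

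To verify that \(\Phi\) intertwines the two differentials, one compares edge-by-edge: for \(v\in\Wedge^*V(\Diagram(I))\) and \(e\colon I\to J\), the edge-\(e\) contribution to \(\Phi\partial_\epsilon v\) is \(\mu(J)\epsilon(e)F_e(v)\), while the corresponding contribution to \(\partial_{\epsilon'}\Phi v\) is \(\mu(I)\epsilon'(e)F_e(v)\). These agree precisely when \(\mu(I)\mu(J)=\epsilon(e)\epsilon'(e)=\eta(e)\), which holds by our choice of \(\mu\).

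The only substantive step is the cocycle verification for \(\eta\), and that follows directly from the ``same type'' hypothesis via the cancellation \(\phi(\sigma)^2=1\); contractibility of \(Q\) then produces \(\mu\) for free, and the chain-map property is a one-line arithmetic check in \(\{\pm 1\}\). I do not anticipate any further obstacles.
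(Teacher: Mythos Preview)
Your argument is correct and is essentially identical to the paper's own proof: both observe that \(\epsilon\cdot\epsilon'\) is a \(1\)-cocycle on the contractible hypercube (since the two assignments satisfy the same face condition), write it as the coboundary of a vertex function, and use that function to define the diagonal sign isomorphism \(\Phi\). Your write-up simply spells out the cocycle check and the chain-map verification that the paper leaves as ``straightforward.''
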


\begin{proof}
  Suppose  that $\epsilon$ and $\epsilon'$ are two edge assignments
  of the same type. Then $\epsilon\cm \epsilon'$ is a one-dimensional
  cocycle, so it can be
  realized as the co-boundary of a zero-cochain; i.e.  we have a map
  $\eta\colon \mathcal{V}(\Diagram) 
  \longrightarrow \{\pm 1\}$ with
  $\eta(v_1)\eta(v_2)=\epsilon(e)\epsilon'(e)$ if $v_1$ and $v_2$ are
  the endpoints of $e$.  Consider the endomorphism $\Phi\colon
  C(\Diagram)\longrightarrow C(\Diagram)$
   which, when restricted to $C(\Diagram(I))$,
  is given by multiplication by $\eta(I)$. It is straightforward to
  verify that $\Phi$ is an isomorphism of chain complexes, from
  $(C(\Diagram),\partial_\epsilon)$ to
  $(C(\Diagram),\partial_{\epsilon'})$.

\end{proof}

\begin{lemma}
  \label{lemma:IndepOfOrientation}
  If \(\Diagram\) and \(\Diagram'\) are two oriented diagrams with the
  same underlying diagram but different orientations, then there are
  edge assignments of the same type \(\epsilon\) and \(\epsilon'\)
  with \((C(\Diagram),\partial_{\epsilon}) \cong (C(\Diagram'
  ),\partial_{\epsilon'})\).
\end{lemma}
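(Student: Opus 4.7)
My plan is to reduce to the case where $\Diagram$ and $\Diagram'$ differ at exactly one crossing $c$; the general statement will then follow by iterating. Let $\epsilon$ be an edge assignment of type $X$ (or $Y$) for $\Diagram$, which exists by Lemma~\ref{lemma:ExistsAssignment}. The idea is to construct an explicit edge assignment $\epsilon'$ of the same type for $\Diagram'$ so that the two chain complexes are in fact \emph{literally equal} as endomorphisms of a common module, and hence trivially isomorphic.

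First I would analyze how the elementary maps $F_e$ change when the arrow on the arc $Z_c$ is reversed. Merges are independent of arc orientations, while the split normalization $F_{Z_i}(1) = a_1 - a_2$ is negated when the arrow on $Z_i$ is reversed. Therefore, for the new diagram $\Diagram'$ the map $F_e^{\mathrm{new}}$ agrees with $F_e$ unless $e$ resolves $c$ and is a split, in which case $F_e^{\mathrm{new}} = -F_e$. I then define
\[
\epsilon^*(e) = \begin{cases} -1 & \text{if $e$ resolves $c$ and corresponds to a split,} \\ +1 & \text{otherwise,} \end{cases}
\]
and set $\epsilon' = \epsilon \cdot \epsilon^*$. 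By construction $\epsilon'(e)\, F_e^{\mathrm{new}} = \epsilon(e)\, F_e$ for every edge $e$, so $\partial_{\epsilon'}^{\Diagram'} = \partial_\epsilon^{\Diagram}$ as endomorphisms of the common underlying $\Z$-module $C(\Diagram) = C(\Diagram')$.

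The remaining task is to verify that $\epsilon'$ is itself an edge assignment of the same type as $\epsilon$ on $\Diagram'$. For each square $\sigma$ one has
\[
\prod_{e \in \sigma} \epsilon'(e) = (-1)^{n_c(\sigma)} \prod_{e \in \sigma} \epsilon(e),
\]
where $n_c(\sigma)$ counts the edges of $\sigma$ that resolve $c$ and are splits. I need $(-1)^{n_c(\sigma)}$ to match the change, if any, in the type of $\sigma$ between $\Diagram$ and $\Diagram'$. If $c$ is not a crossing of $\sigma$, $n_c(\sigma) = 0$ and the type is unchanged. If $c$ is a crossing of $\sigma$ and both $c$-edges have the same handle type (both merges or both splits), then $n_c(\sigma)$ is even, the sign in~\eqref{eq:DefAC} is unaffected, and inspection of Figure~\ref{fig:Squares} confirms that the $X/Y$ label is also preserved. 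In the remaining mixed sub-case, exactly one of the two $c$-edges is a split and $n_c(\sigma) = 1$; one of the two composites in~\eqref{eq:DefAC} is negated while the other is not, so the sign of that equation flips, swapping $A \leftrightarrow C$, and a parallel inspection of Figure~\ref{fig:Squares} shows that the $X$ and $Y$ labels also swap. Since the parity requirements for a type-$X$ (respectively type-$Y$) edge assignment are opposite within $\{A,C\}$ and within $\{X,Y\}$, the flip of type matches the flip of parity, and so $\epsilon'$ is of the same type as $\epsilon$.

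The main obstacle will be the mixed sub-case, specifically the claim that the $X/Y$ labels swap under reversal of a single arrow. Unlike the $A/C$ swap, which follows algebraically from~\eqref{eq:DefAC}, the $X/Y$ swap is a geometric assertion about the configurations with trivial double composites enumerated in Figure~\ref{fig:Squares}, and requires a short finite case analysis verifying that in each such oriented configuration the reversal of one arrow produces a picture of the opposite label.
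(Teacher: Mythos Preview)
Your proof is correct and is essentially the paper's argument: your $\epsilon^*$ is the paper's cochain $\alpha$, and verifying that $\epsilon' = \epsilon \cdot \epsilon^*$ has the right type amounts to the paper's identity $\phi' = \phi \cdot d\alpha$. One minor simplification available: in any face of type $X$ or $Y$ each path from $I_{00}$ to $I_{11}$ is necessarily a split followed by a merge (otherwise the composite could not vanish), so your ``both $c$-edges same handle type'' sub-case is vacuous there and only the mixed case---with the $X \leftrightarrow Y$ swap you correctly flag as the main obstacle---actually occurs.
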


\begin{proof}
It suffices to consider the case where we change the orientation at a
single crossing \(c\).
If we denote the maps in the new cube of resolutions by
\(F_{e}'\), then  we can write \(F_e' = \alpha(e) \cdot F_{e}\). 
Here \(\alpha(e)
= -1\) if the crossing associated to \(e\) is \(c\) and the
corresponding cobordism is a split, and \(\alpha(e) = 1\) otherwise.
If \(\phi'\) is the class in \(C^2(Q;G)\)
associated to the new cube, we claim that 
\( \phi' = \phi\cdot d\alpha. \) For faces of type \(A\) and \(C\),
this is obvious, while for a face \(\sigma\) of type \(X\) or
\(Y\), 
reversing the orientation of one of the two arcs in \(\sigma\) switches
types \(X\) and \(Y\), so \(\phi'(\sigma) = -\phi (\sigma)\). On the
other hand, exactly one of the two edges associated to \(c\) is a
split (and thus has \(\alpha(e) = -1\).) It follows that if
\(\epsilon\) is an edge assignment of type \(X\) for the old cube,
\(\alpha \cdot \epsilon\) is an edge assignment of type \(X\) for the
new cube, and the boundary map \(\partial_{\alpha \cdot \epsilon}\) is
in the new complex is exactly the same as \(\partial_{\epsilon}\) in
the old one. 
\end{proof}

\begin{lemma}
  If $\epsilon$ and $\epsilon'$ are sign assignments of opposite
  types, then there is an isomorphism $(C,\partial_\epsilon)\cong
  (C,\partial_{\epsilon'})$.
\end{lemma}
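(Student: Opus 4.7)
The plan is to construct an explicit chain isomorphism $\Phi\colon (C,\partial_\epsilon)\to(C,\partial_{\epsilon'})$. The naive approach of Lemma~\ref{lemma:Uniqueness}, taking $\Phi$ to be multiplication by a vertex-level sign $\eta(I)$, is no longer available: the $1$-cochain $\delta=\epsilon\cdot\epsilon'$ satisfies $d\delta=\phi_X\cdot\phi_Y^{-1}$, which equals $+1$ on squares of type $A$ or $C$ and $-1$ on squares of type $X$ or $Y$, so $\delta$ is not a coboundary. The crucial observation is that this obstruction lives exactly on those squares where the double composites $F_{e_1}\circ F_{e_2}$ are identically zero. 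This suggests the discrepancy can be absorbed by an algebra automorphism of $\Wedge^*V(\Diagram(I))$ at each vertex rather than by a global scalar.

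Accordingly, I would take $\Phi_I=\eta(I)\cdot\psi_I$, where $\psi_I$ is the algebra automorphism of $\Wedge^*V(\Diagram(I))$ induced by a choice of signs $s_I\colon\{\text{circles of }\Diagram(I)\}\to\{\pm1\}$. A direct computation of the intertwining condition $\Phi_J\circ F_e=\epsilon(e)\epsilon'(e)\cdot F_e\circ\Phi_I$, edge by edge, yields two families of local constraints. On a merge edge $e\colon I\to J$ with $c_1,c_2\mapsto c$, one is forced to have $s_I(c_1)=s_I(c_2)$, and the condition becomes
\[
s_I(c_1)\,s_J(c)\,\eta(J)/\eta(I)=\epsilon(e)\epsilon'(e).
\]
On a split edge $e\colon I\to J$ with $c\mapsto c_1,c_2$, one is forced to have $s_J(c_1)=s_J(c_2)=s_I(c)$, and the condition becomes
\[
s_I(c)\,\eta(J)/\eta(I)=\epsilon(e)\epsilon'(e).
\]
Thus $s$ propagates rigidly along split edges but has extra freedom at merges; on unchanged circles, $s_J(c')=s_I(c')$. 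Crucially, the splits give us enough freedom to cancel the obstruction on precisely the $X/Y$-type faces.

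After eliminating $\eta$, the relations collapse to the single cocycle condition $d\tau=\phi_X\cdot\phi_Y^{-1}$ for the $1$-cochain $\tau$ defined by $\tau(e)=s_I(c_1)s_J(c)$ on merges and $\tau(e)=s_I(c)$ on splits; once this holds, $\epsilon\epsilon'\tau$ is a $1$-cocycle and, by contractibility of the hypercube, is the coboundary of some $\eta\colon\mathcal{V}(\Diagram)\to\{\pm1\}$, providing $\Phi=\eta\cdot\psi$. The heart of the argument, which I expect to be the main obstacle, is the face-by-face verification --- in the spirit of Lemma~\ref{lemma:CountingInCube} and using the enumeration of cube types in Figure~\ref{fig:Cubes} --- that a globally consistent $s$ exists realizing this cocycle identity. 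Concretely, one goes through each of the combinatorial configurations in Figure~\ref{fig:Squares} and checks that the products of $s$-values around a square produce $-1$ on type $X$ and $Y$ squares and $+1$ on type $A$ and $C$ squares; consistency across cubes then follows from the analogous enumeration used to build the original edge assignments.
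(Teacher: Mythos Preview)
Your approach has a genuine gap. The intertwining condition $\Phi_J\circ F_e=\epsilon(e)\epsilon'(e)\,F_e\circ\Phi_I$ with $\Phi_I=\eta(I)\psi_I$ and $\psi_I$ a diagonal algebra automorphism is more rigid than you claim. On a merge edge $e\colon I\to J$ with $c_1,c_2\mapsto c$, evaluating the identity on $1\in\Wedge^0$ gives $\eta(J)=\epsilon(e)\epsilon'(e)\,\eta(I)$, while evaluating on $c_1$ gives $\eta(J)s_J(c)=\epsilon(e)\epsilon'(e)\,\eta(I)s_I(c_1)$; dividing, you are forced to have $s_J(c)=s_I(c_1)=s_I(c_2)$. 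So there is no ``extra freedom at merges'': the sign $s$ propagates rigidly across \emph{every} edge, split or merge, and hence is globally constant on each connected component of the link projection. With $s\equiv\sigma$, your cochain becomes $\tau(e)=1$ on merges and $\tau(e)=\sigma$ on splits, so $d\tau(\text{face})=\sigma^{\#\{\text{splits in the face}\}}$. But the number of split edges in any square face is even (the two paths from $I_{00}$ to $I_{11}$ must change the number of circles by the same amount), so $d\tau\equiv 1$. In particular $d\tau$ is never $-1$ on type $X$ or $Y$ faces, and the obstruction cannot be cancelled this way. No case-by-case verification will rescue this; the failure is structural.

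The paper's argument avoids building $\Phi$ by hand and instead exploits a geometric symmetry. A checkerboard colouring of the projection partitions the crossings into two classes via a function $\theta\colon\Crossings\to\{\pm1\}$. Replacing the crossing orientations $\orient$ by $\theta\cdot\orient$ swaps type $X$ faces with type $Y$ faces while leaving types $A$ and $C$ unchanged (the only orientation-dependent $A$/$C$ configurations involve two crossings of the \emph{same} colour, so both arrows flip or neither does). Hence a type $X$ assignment $\epsilon$ for $\orient$ is simultaneously a type $Y$ assignment for $\theta\cdot\orient$, and the result follows from the already-established independence of orientation (Lemma~\ref{lemma:IndepOfOrientation}) together with Lemma~\ref{lemma:Uniqueness}.
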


\begin{proof}
  We divide the crossings of $L$ into two equivalence classes as
  follows.  Fix one of the two checkerboard colorings of the diagram.
  We can then define a function $\theta$ from the crossings into $\pm 1$
  depending on how the crossing is colored, as illustrated in 
  Figure~\ref{fig:CheckerBoard}. 

  \begin{figure}
    \begin{center}
      \mbox{\vbox{\epsfbox{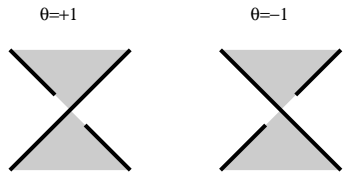}}}
    \end{center}
    \caption {{\bf Black and white coloring.}
      Given a checkerboard coloring of the projection, we can introduce a
      function $\theta\colon \Crossings\longrightarrow \{\pm 1\}$ as
      illustrated.}
    \label{fig:CheckerBoard}
  \end{figure}
  
  Fix some initial orientation $\orient$ for the crossings in a
  knot projection, and a sign assignment $\epsilon$. Consider next a
  different set of orientations on the initial crossings, specified by
  $\theta\cm \orient$ (i.e. if $c$ is some crossing with type
  $\theta(c)=+1$, then ${\orient'}(c)$ is the same as
  ${\orient}(c)$, whereas if $\theta(c)=-1$, then the two
  orientations point in opposite directions).  Clearly, this change of
  orientations swaps squares of type $X$ (for one orientation) with
  those of type $Y$ (for the other).  Moreover, it preserves the
  types of all other squares.  (Note that there are only two squares
  not of type $X$ or $Y$ whose types depend on the orientations of the
  two arcs; for those two squares, the arcs represent crossings in the
  same equivalence class.) Thus, if we view $\epsilon$ as a type $X$
  sign assignment for orientation $\orient$, then $\epsilon$ can
  also be viewed as a type $Y$ assignment for the orientation
  $\orient'$. The lemma now follows from
  Lemma~\ref{lemma:IndepOfOrientation}.
\end{proof}

\section{Topological invariance}
\label{sec:Invariance}

In this section, we check that $\Kh(L)$ is invariant under the three
Reidemeister moves, thus verifying Theorem~\ref{thm:KnotInvariant}. 
The argument is more or less the same one used by Khovanov 
to prove invariance of the ordinary $sl(2)$ homology~\cite{Khovanov}, see
also~\cite{KhovanovRozansky}, ~\cite{KhovanovRozanskyII}.  
We follow Bar-Natan's
exposition in~\cite{BarNatan}.

\begin{prop}
  \label{prop:RI}
  The homology groups of $C(\Diagram)$ remain invariant as the 
  diagram undergoes a Reidemeister move of Type I.
\end{prop}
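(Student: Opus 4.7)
The plan is to adapt Khovanov's original proof of Reidemeister-I invariance (via Gauss elimination on the mapping cone at the new crossing) to the exterior algebra setting used here. Let $\kappa$ denote the new crossing created by the Reidemeister-I move. The cube of resolutions of $\Diagram'$ splits as a mapping cone
$$C(\Diagram') \;\cong\; \mathrm{Cone}\bigl(\pm f \colon C_0 \to C_1\bigr),$$
where $C_i$ is the subcomplex spanned by resolutions with $\kappa$ resolved to $i$. One of $C_0, C_1$ is canonically identified with $C(\Diagram)$, while the other is $C(\Diagram \sqcup c) \cong C(\Diagram) \otimes \Wedge^* \mathbb{Z}\langle c \rangle = C(\Diagram) \oplus c \wedge C(\Diagram)$, where $c$ is a small disjoint circle at the location of the former kink. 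The cobordism $f$ is a merge in one case (induced by the projection $c \mapsto a$, where $a$ is the component of $\Diagram$ adjacent to the kink) and a split in the other (with $f(v) = (a - c) \wedge v$).

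Next, make the change of generator $c' := c - a$, so that $C(\Diagram \sqcup c)$ decomposes as $C(\Diagram) \oplus c' \wedge C(\Diagram)$. A direct calculation from the defining formulas for merges and splits shows that in either case the map $f$ acquires an \emph{identity summand}: in the merge case, $f$ restricts to the identity on the $C(\Diagram)$ summand of the domain and kills the $c' \wedge C(\Diagram)$ summand, since $(c - a) \wedge w \mapsto a \wedge w - a \wedge w = 0$; in the split case, $f$ sends $v$ to $-c' \wedge v$, mapping the domain isomorphically onto the $c' \wedge C(\Diagram)$ summand of the codomain. The standard Gauss-elimination lemma for mapping cones with an identity summand then produces an acyclic subcomplex of $C(\Diagram')$ whose quotient is isomorphic to $c' \wedge C(\Diagram) \cong C(\Diagram)$, shifted by the appropriate bigrading (one $q$-shift from the factor of $c'$, and a homological shift coming from the renormalisation by $n_\pm$).

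The main obstacle is verifying that the substitution $c' = c - a$ genuinely assembles into a splitting of $C(\Diagram \sqcup c)$ as a chain complex. Although the extra circle $c$ is a well-defined global generator, the component $a_I$ of $\Diagram(I)$ carrying the kink can change as $I$ varies (for example, when a resolved crossing merges $a_I$ with a neighboring component, or splits the strand carrying it). One checks by a short case-analysis that under each edge map in the subcube corresponding to $\Diagram \sqcup c$, the element $c'_I = c - a_I$ is transported to $\pm c'_J$, so that the decomposition $C(\Diagram) \oplus c' \wedge C(\Diagram)$ is preserved by all the remaining differentials up to sign. Any residual sign discrepancies can then be removed by modifying the edge assignment $\epsilon$ via a zero-cochain, exactly as in the proof of Lemma~\ref{lemma:Uniqueness}. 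Once this splitting is established, the Gauss-elimination argument goes through and yields $H(C(\Diagram')) \cong H(C(\Diagram))$.
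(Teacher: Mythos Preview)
Your argument is correct and follows essentially the same route as the paper's proof: both write $C(\Diagram')$ as the mapping cone on the new crossing, identify the subcomplex $(c-a)\wedge C(\Diagram)$ (the paper writes it as $(v_1-v_0)\wedge C(\Diagram_0)$) as the piece surviving after cancelling the acyclic part, and then identify this with $C(\Diagram)$. Your treatment is a bit more explicit about the sign bookkeeping needed for that last identification---the paper simply asserts it---but the underlying argument is the same Gauss-elimination on the mapping cone.
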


\begin{proof}
  Suppose that $\Diagram'$ is obtained from $\Diagram$ by a
  Reidemeister move of type I, as shown in
  Figure~\ref{fig:ReidemeisterI}. For concreteness, we focus on the
  Reidemeister move illustrated in the top row.

  \begin{figure}
    \begin{center}
      \mbox{\vbox{\epsfbox{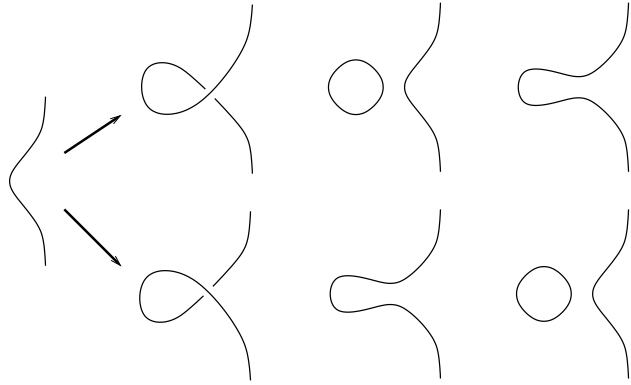}}}
    \end{center}
    \caption {{\bf Reidemeister I.}
      Starting with the initial picture on the left, we can perform
      two types of Reidemeister I move, as illustrated in the next column.
      After that, we display the $0$- and $1$-resolutions respectively.}
    \label{fig:ReidemeisterI}
  \end{figure}

  We can split the $\Z$-module $C(\Diagram')\cong C(\Diagram_0)\oplus
  C(\Diagram_1)$, where $\Diagram_0$ denotes  the
  disjoint union of $\Diagram$ with an unknotted component $O$, and
  $\Diagram_1$ is identified with the original diagram $\Diagram$.
  Indeed, if $\epsilon$ is an edge assignment for $\Diagram$, we can
  use it to induce an edge assignment $\epsilon'$ on $\Diagram'$ by
  declaring its restriction to $\Diagram_1$ to be identified with
  $-\epsilon$; its restriction to $\Diagram_0$ to agree with
  $\epsilon$, and its assignment to the edges connecting $\Diagram_0$
  to $\Diagram_1$ to be all $+1$. The map $\epsilon'$ is an edge
  assignment of the same type as $\epsilon$, since all the squares
  involving the edges connecting $\Diagram_0$ and $\Diagram_1$ are of
  type $C$ (there is exactly one edge leaving the distinguished
  unknotted component).

  In effect, we have identified $C(\Diagram')$ with the mapping cone of a map
  $$
  \begin{CD}
    D\colon C(\Diagram_0) @>>> C(\Diagram_1),
  \end{CD}$$
  where 
  \begin{eqnarray*}
    (C(\Diagram_0),\partial_0)\cong (C(\Diagram),\partial)\otimes
    \Wedge^* (\Z\cm v_0)
    &{\text{and}}&
    (C(\Diagram_1),\partial_1)\cong (C(\Diagram),-\partial),
  \end{eqnarray*}
  and the map $D$  identifies $v_0$ with $v_1$
  (where $v_0$ denotes the unknotted component in $\Diagram_0$ and $v_1$
  denotes the component which is connected to it). 
  This chain complex is clearly quasi-isomorphic to the subcomplex 
  $(v_1-v_0)\wedge (C(\Diagram_0),\partial_0)$, which in turn is
  identified with the complex $C(\Diagram)$.
  This establishes the stated isomorphism for one of the two types
  of Reidemeister move.
\end{proof}

\begin{prop}
  \label{prop:RII}
  The homology groups of $C(\Diagram)$ remain invariant as the 
  diagram undergoes a Reidemeister move of Type II.
\end{prop}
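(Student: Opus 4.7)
The plan is to follow the standard Bar-Natan/Khovanov mapping-cone argument for Reidemeister II invariance, taking care to verify that the signs work out for the odd theory. Suppose $\Diagram'$ is obtained from $\Diagram$ by inserting two consecutive crossings $c_1, c_2$. First, I would express $C(\Diagram')$ as an iterated mapping cone over the $2\times 2$ subcube of resolutions of $c_1$ and $c_2$; each vertex of this subcube is a complex $C(\Diagram_{ij})$, $i,j \in \{0,1\}$, whose underlying diagram is obtained from $\Diagram$ by resolving $c_1, c_2$ in the corresponding way. Inspection of the tangle shows that for one choice of resolutions, say $(i,j)=(1,1)$, the resulting diagram is isotopic to $\Diagram$ itself (the two saddles undo the two crossings); for the opposite choice $(0,0)$ the diagram is isotopic to $\Diagram \sqcup O$, where $O$ is a small unknotted circle created by the two saddles; and the two mixed resolutions $\Diagram_{01}$ and $\Diagram_{10}$ agree with a third tangle.

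Next, I would pin down the four edge maps of this square. The two edges out of $\Diagram_{00}$ are merge cobordisms that absorb $O$ into a neighboring component; by our sign conventions they are induced by the projection killing $(v_O - v)$ in $\Wedge^* V$. The two edges into $\Diagram_{11}$ are split cobordisms; these wedge with the difference of two newly separated components. Since we are free to adjust the edge assignment within its type by Lemma~\ref{lemma:Uniqueness} and to swap the orientations at $c_1, c_2$ by Lemma~\ref{lemma:IndepOfOrientation}, I may normalize the four edge signs around this square to any convenient choice of type $X$ (or $Y$).

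The heart of the argument is to exhibit an acyclic subcomplex of $C(\Diagram')$ whose quotient is $(C(\Diagram),\partial)$. Using the splitting
$$
\Wedge^* V(\Diagram_{00}) \;=\; \Wedge^* V(\Diagram)\;\oplus\; (v_O - v)\wedge \Wedge^* V(\Diagram),
$$
I would decompose $C(\Diagram_{00})$ into two summands, and similarly decompose the middle complexes $C(\Diagram_{01})$, $C(\Diagram_{10})$ according to the position of the analogous extra generator. A Gaussian-elimination step then shows that one summand of $C(\Diagram_{00})$ cancels against $C(\Diagram_{01})$ via the relevant merge map, while the remaining summand cancels $C(\Diagram_{10})$ against $C(\Diagram_{11})$ via the corresponding split. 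What survives is identified canonically with $(C(\Diagram),\partial)$, yielding the desired chain-homotopy equivalence. This deals with one of the two oriented variants of the Reidemeister II move; the other (antiparallel strands) is formally identical after reversing one orientation, invoking Lemma~\ref{lemma:IndepOfOrientation}.

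The main obstacle I anticipate is the sign bookkeeping: each Gaussian-elimination step must be compatible with the chosen edge assignment, and one must check that the induced differential on the residual quotient agrees with $\partial_{\epsilon}$ on $C(\Diagram)$ rather than differing by a nontrivial sign twist. Lemmas~\ref{lemma:Uniqueness} and~\ref{lemma:IndepOfOrientation} provide enough flexibility that any such discrepancy can be absorbed into a change of edge assignment or orientation, so the identification indeed exists.
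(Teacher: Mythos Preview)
Your overall strategy---write $C(\Diagram')$ as an iterated cone over the $2\times 2$ square of resolutions at the two new crossings, then cancel acyclic pieces---is exactly the right one and is what the paper does. But your identification of the four corners of that square is wrong, and the error is not just a labeling convention.

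You claim that $\Diagram_{11}\cong\Diagram$, $\Diagram_{00}\cong\Diagram\sqcup O$, and $\Diagram_{01}=\Diagram_{10}$. A parity check rules this out: each edge of the square is a single saddle, so adjacent corners differ in their number of circles by exactly one. If $\Diagram$ has $m$ circles and $\Diagram\sqcup O$ has $m+1$, then the common ``third tangle'' at $(0,1)$ and $(1,0)$ would need to have both $m\pm 1$ circles (adjacent to $\Diagram$) and $(m+1)\pm 1$ circles (adjacent to $\Diagram\sqcup O$); these conditions have no common solution. Your description of the edge maps is likewise inconsistent: if both edges out of $\Diagram_{00}$ are merges absorbing $O$, the middle corners have $m$ circles, but if both edges into $\Diagram_{11}$ are splits, then $\Diagram_{11}$ has $m+1$ circles, contradicting $\Diagram_{11}=\Diagram$.

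The actual picture (and the one the paper uses) is: the two \emph{diagonal} corners $\Diagram_{00}$ and $\Diagram_{11}$ coincide and equal the ``turnback'' tangle $T$ (the other planar connection of the four endpoints), $\Diagram_{01}$ is the original $\Diagram$, and $\Diagram_{10}$ is $T$ together with a small circle $v_2$. In particular the extra circle is \emph{not} disjoint from a copy of $\Diagram$; it sits next to the turnback. Consequently your proposed survivor---``one summand of $C(\Diagram_{00})$ identified with $C(\Diagram)$''---does not exist.

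The cancellation that does work is the one in the paper: inside $C(\Diagram_{10})$, let $X$ be the summand of exterior products not involving $v_2$; the merge $v_2\sim v_3$ carries $X$ isomorphically onto $C(\Diagram_{11})$, so $X\oplus C(\Diagram_{11})$ is an acyclic subcomplex. In the quotient, the split map $\wedge(v_1-v_2)$ carries $C(\Diagram_{00})$ isomorphically onto $C(\Diagram_{10})/X$, so after a second cancellation only $C(\Diagram_{01})=C(\Diagram)$ remains. Once you correct the square, your Gaussian-elimination outline becomes exactly this argument; the sign discussion you give at the end is then adequate.
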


\begin{proof}
  We consider the diagram for the second Reidemeister move, following
  notation suggested in Figure~\ref{fig:ReidemeisterII}.  The
  resolutions after the Reidemeister move have four types,
  $\Diagram_{i,j}$ with $i,j\in\{0,1\}$, so that $\Diagram_{00}$ and
  $\Diagram_{11}$ are identified, $\Diagram_{10}$ is obtained from
  $\Diagram_{00}$ by inserting an unknotted component, and
  $\Diagram_{01}$ is identified with the diagram before the
  Reidemeister move. The chain complex after the Reidemeister move can
  be written in the following form:
$$\begin{CD}
  C(\Diagram_{0,1})@>{d^{*,1}}>>C(\Diagram_{1,1}) \\
  @A{d^{0,*}}AA @AA{v_2\sim v_3}A \\
  C(\Diagram_{0,0})@>{\wedge (v_1-v_2)}>> C(\Diagram_{1,0})
 \end{CD}$$ 
 (we have labelled $v_1$ to be the component belonging to the top
 of the Reidemeister II move, $v_2$ to be the middle component, and
 $v_3$ to be the bottom). In defining this complex, we have
 implicitly picked a sign assignment \(\epsilon'\) for \(\Diagram'\)
 (the diagram after the second Reidemeister move.) It is easy to see that
 the restriction of \(\epsilon'\) to \(C(\Diagram_{1,0})\) will be a
 sign assignment for \(\Diagram\). 

  \begin{figure}
    \begin{center}
      \mbox{\vbox{\epsfbox{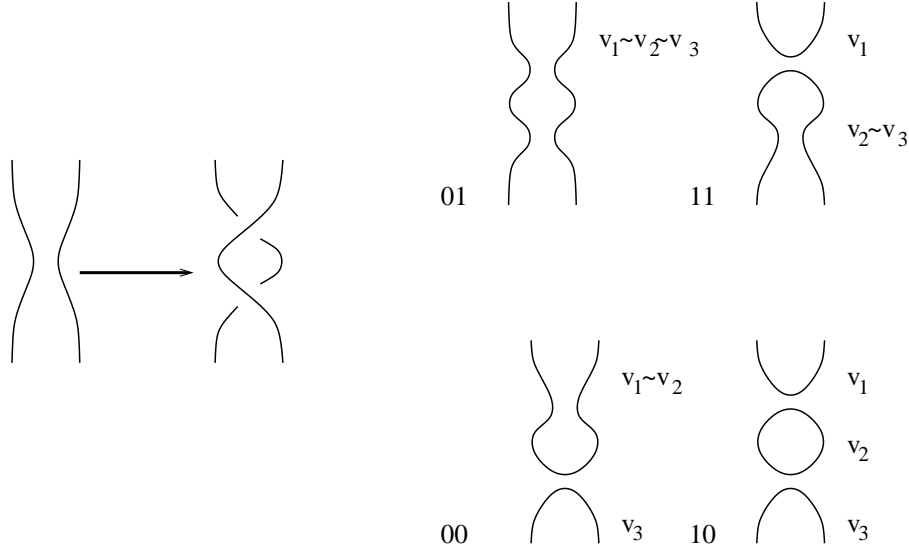}}}
    \end{center}
    \caption {{\bf Reidemeister II.}
      Starting with the initial picture on the left, we can perform a
      Reidemeister move to obtain the second picture. We have then
      illustrated (and labelled) the four resolutions of this diagram.}
    \label{fig:ReidemeisterII}
  \end{figure}

Let \(X \subset C(\Diagram_{0,1})\) be the kernel of the contraction
map with \(v_2^*\). (In other words, \(X\) is generated by those
exterior products which do not contain \(v_2\) as a factor.) It
is not difficult to see that \(A = X \oplus C(\Diagram_{1,1})\) is an
acyclic subcomplex of \(C(\Diagram')\). The quotient complex
\(C(\Diagram')/A\) will have the same homology as
\(C(\Diagram')\). This quotient is of the form
$$\begin{CD}
  C(\Diagram_{0,1}) \\
  @A{d^{0,*}}AA  \\
  C(\Diagram_{0,0})@>{\wedge (v_1-v_2)}>> C(\Diagram_{1,0})/X
 \end{CD}$$ 
This complex clearly has \(C(\Diagram_{0,1})\) as a
subcomplex, and it is easy to check that the quotient 
\((C(\Diagram')/A)/C(\Diagram_{0,1})\) 
is acyclic. Thus \(C(\Diagram_{0,1})\) will have
the same homology as \(C(\Diagram')\). 

\end{proof}
\begin{prop}
  \label{prop:RIII}
  The homology groups of $C(\Diagram)$ remain invariant as the diagram
  undergoes a Reidemeister move of Type III. 
\end{prop}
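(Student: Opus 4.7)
The plan is to adapt Bar-Natan's categorified Kauffman-bracket argument for Reidemeister III invariance~\cite{BarNatan} to the odd setting, using iterated mapping cone decompositions and acyclic subcomplex cancellations of the same flavor as in the proof of Proposition~\ref{prop:RII}. Given two oriented diagrams $\Diagram$ and $\Diagram'$ agreeing outside a disk and differing by an RIII move inside it, the three crossings inside the disk produce a three-dimensional sub-cube of resolutions on each side, and one wants to reduce both $C(\Diagram)$ and $C(\Diagram')$ by acyclic cancellations to the same small complex.

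The concrete steps I would carry out are as follows. First, filter $C(\Diagram)$ by the value of $I(c)$ at a carefully chosen crossing $c$ in the RIII disk, exhibiting $C(\Diagram)$ as the mapping cone of a chain map between two two-dimensional sub-cubes; inside each sub-cube, the remaining two crossings sit in a local RII configuration. Second, apply the acyclic subcomplex/quotient cancellation of Proposition~\ref{prop:RII} to each sub-cube: as in that proof, a chain complex generated by those exterior products not containing a distinguished circle factor forms an acyclic subcomplex on one side and an acyclic quotient on the other. Combining these cancellations reduces $C(\Diagram)$ up to quasi-isomorphism to a small complex depending only on the underlying ``braid-like'' triple resolution (the three strands passing without crossings). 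Third, perform the identical sequence of reductions on $C(\Diagram')$ and observe that the two simplified complexes are built from the same underlying generators with identical differentials, so there is a tautological identification between them.

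The main obstacle will be sign bookkeeping. In Khovanov's original theory signs are fixed by a global formula, but here existence of the differential rests on the type $X$/$Y$ classification of square faces given by Lemma~\ref{lemma:ExistsAssignment}, and it is not automatic that the cancellations of step two carry an edge assignment of type $X$ (resp.\ $Y$) on $\Diagram$ to an edge assignment of the same type on the simplified complex, nor that the simplified edge assignments on the two sides agree. I expect to verify face-by-face that each RII-style cancellation respects the $X$/$Y$ type, so that the residual complexes acquire edge assignments of matching type; any residual discrepancy can then be absorbed into a global isomorphism of the form $\Phi$ built in the proof of Lemma~\ref{lemma:Uniqueness}, possibly after appealing to Lemma~\ref{lemma:IndepOfOrientation} to align the arrow conventions at the three RIII crossings. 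A secondary wrinkle is that there are several geometric variants of RIII depending on crossing signs; it should suffice to handle one representative case directly and reduce the others by Reidemeister II moves already handled in Proposition~\ref{prop:RII}.
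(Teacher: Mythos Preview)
Your overall strategy---follow Bar-Natan's cancellation argument and isolate the sign bookkeeping as the main new work---is exactly the paper's. But the concrete reduction you outline will not go through: after filtering by any one of the three crossings, it is \emph{not} the case that the remaining two crossings form an RII pair in each sub-cube. In the local RIII picture the three crossings sit on three distinct pairs of strands (or, in the braid model $\sigma_1\sigma_2\sigma_1$, two crossings of the \emph{same} sign on one pair together with a third crossing), and no resolution of a single crossing leaves the other two in the configuration $\sigma_i\sigma_i^{-1}$. Consequently you cannot collapse $C(\Diagram)$ to a complex supported on the single braid-like resolution; the cube is not acyclic away from one vertex.

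What the paper actually does is this. Among the eight resolutions of the before-RIII triangle, exactly one (labelled $010$) contains a small disjoint circle $\mu$; the edge map $C(000)\to \mu\wedge C(010)$ is an isomorphism, and cancelling it removes those two pieces. Next, two of the remaining resolutions $C(110)$ and $C(011)$ are planar-isotopic, and a short exact sequence with acyclic kernel collapses them to a single term. The result is a five-term intermediate complex $P$ (built from $100$, $001$, $110\cong 011$, $101$, $111$), and the analogous pair of cancellations on the after-RIII cube produces the same five-term complex. Your instincts on signs are correct and match the paper's treatment: one fixes a common edge assignment on $C(110)\cong C(011)$, extends over the cube, and then argues---via the vanishing of $H^1$ of the product of the five-term template with the remaining hypercube---that any two one-cochains $\epsilon$ with $\delta\epsilon=\psi$ give isomorphic complexes; the last step is a direct check that the obstruction cocycles $\psi$, $\psi'$ coming from the two sides of the move agree once the crossing orientations are aligned.
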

\begin{proof}
  To see how, we consider the cube of resolutions before the
  Reidemeister move of type III is performed. This is illustrated in
  Figure~\ref{fig:RIII}.  After contracting the indicated arrows, we
  obtain an intermediate chain complex illustrated in
  Figure~\ref{fig:RIIIB}.  This in turn is related to the cube of
  resolutions for the projection after the Reidemeister move is
  performed, by contracting two edges as indicated in
  Figure~\ref{fig:RIIID}.

  \begin{figure}
    \begin{center}
      \mbox{\vbox{\epsfbox{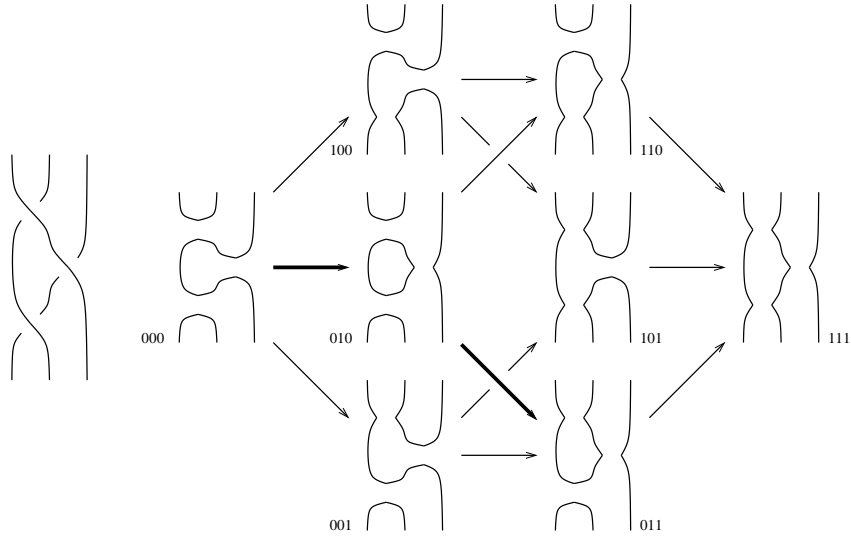}}}
    \end{center}
    \caption {{\bf Reidemeister III.}
      Consider the local picture on the left. We obtain a corresponding
      cube of resolutions which is pictured on the right, where the resolutions
      are labelled by their corresponding resolution vector.  The thick arrow
      is contracted to obtain the chain complex from
      Figure~\ref{fig:RIIIB} below.}
    \label{fig:RIII}
  \end{figure}

  \begin{figure}
    \begin{center}
      \mbox{\vbox{\epsfbox{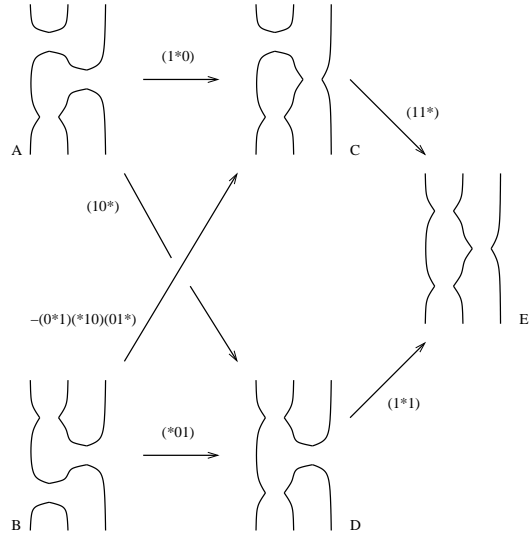}}}
    \end{center}
    \caption {{\bf Reidemeister III, after contraction.}
      }
    \label{fig:RIIIB}
  \end{figure}

  We give a more precise version of this argument, with signs.
  Consider the diagram illustrated in Figure~\ref{fig:RIII}.
  This complex has a quotient, consisting of
  a mapping cone of a map 
  $$d\colon C(000)\longrightarrow C(010)\wedge \mu,$$
  where here $\mu$ denotes the unknotted component in the resolution
  indicated by the vector $010$.
  Clearly, this map $d$ is an isomorphism, and hence the full complex
  from Figure~\ref{fig:RIII} is isomorphic to the subcomplex
  illustrated on the left in Figure~\ref{fig:RIIIC}.

  \begin{figure}
    \begin{center}
      \mbox{\vbox{\epsfbox{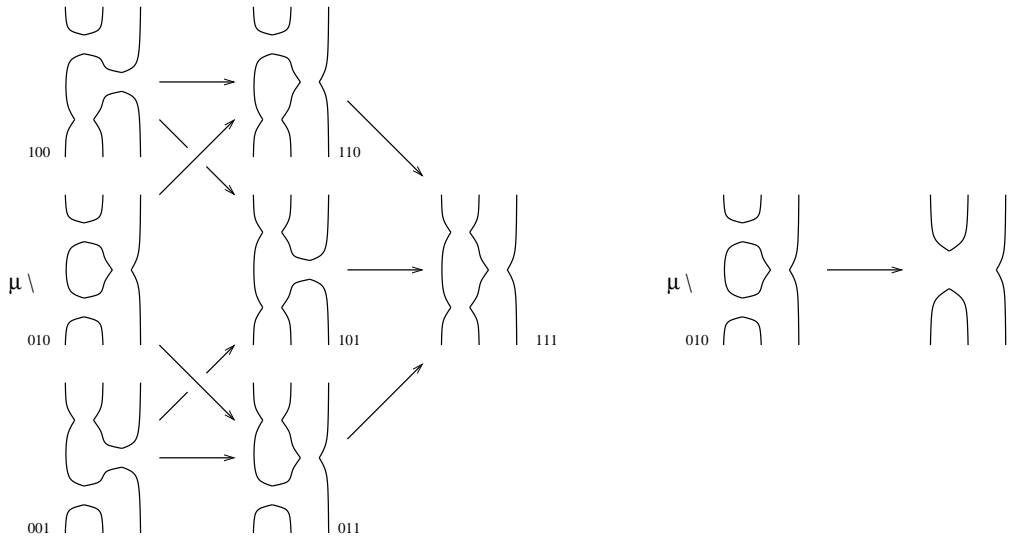}}}
    \end{center}
    \caption {{\bf Reidemeister III, intermediate states.}
      On the left is the quotient complex after the rightmost thick arrow
      from Figure~\ref{fig:RIII} is contracted. There is a natural map to 
      the mapping cone illustrated on the right.}
    \label{fig:RIIIC}
  \end{figure}

  Observe that \(C(110) \cong C(011)\). Fix a sign assignment
  \(\epsilon_{110}=\epsilon_{011}\) on these two cubes, and extend it to a sign
  assignment \(\epsilon\) on the entire cube of resolutions. (This is
  possible, since the quotient space in which we collapse \(C(110)\)
  and \(C(011)\) to a point has vanishing \(H^2\).)
  The five term complex $P$ shown in Figure~\ref{fig:RIIIB} is
  obtained by multiplying the edge maps by the indicated signs. For
  example, the differential from $A$ to $D$ is multiplied by the sign
  $\epsilon(10*)$ from the hypercube of Figure~\ref{fig:RIII};
  similarly, the differential from $B$ to $C$ is multiplied by the
  sign $-\epsilon(0\!*\!1)\epsilon(01*)\epsilon(*10)$.
  
  Consider the map $\Phi$ from the complex on the left in
  Figure~\ref{fig:RIIIC} to the complex in Figure~\ref{fig:RIIIB}
  defined as follows.  $\Phi$ induces the natural identification from
  $C(100)$ to $A$; $\Phi$ induces the natural identification from
  $C(001)$ to $B$; $\Phi$ induces the identification of $C(110)$ to $C$;
  $\Phi$ induces the identification of $C(011)$ with $C$ times
  $-\epsilon(*10)\epsilon(01*)$; $\Phi$ induces induces the
  identification of $C(101)$ with $D$; $\Phi$ induces induces the
  identification of $C(111)$ with $E$. Finally, the restriction of $\Phi$
  to $C(010)$ is trivial. It is straightforward to verify that $\Phi$
  is a chain map.
   
  \begin{figure}
    \begin{center}
      \mbox{\vbox{\epsfbox{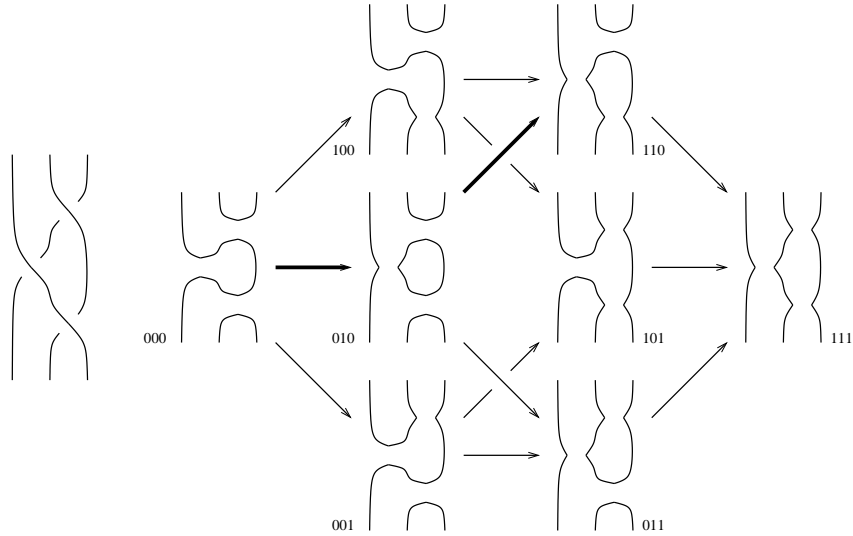}}}
    \end{center}
    \caption {{\bf After Reidemeister III.}
      This is the complex after Reidemeister III is performed on
      Figure~\ref{fig:RIII}. Contracting the thick arrows, we obtain a
      five term complex which is isomorphic to the one from Figure~\ref{fig:RIIIB}.}
    \label{fig:RIIID}
  \end{figure}
  
  We now define a chain map $\Psi$ from the right complex $R$ to the left
  complex $L$.  Note that the right complex consists of two terms, one of
  which is identified with $C(010)/\mu$, and the second of which is
  naturally identified with $C(110)$ or $C(011)$. The differential
  is an isomorphism (and hence the right complex is acyclic).

  Restricted to $C(010)/\mu$, the map \(\Psi\) is
  an isomorphism onto the corresponding term in the cube of
  resolution; its restriction to the second term is identified with
  the diagonal identification onto $C(110)$ and $C(011)$; in fact, we
  multiply the component in $C(110)$ with the sign of the edge
  $\epsilon(1\!*\!0)$ and the component in $C(011)$ with the sign of the
  edge $\epsilon(0\!*\!1)$.  It is straightforward to verify that $\Psi$
  is a chain map. Moreover, it is straightforward to verify the short exact sequence
  $$
  \begin{CD}
    0@>>>R @>{\Psi}>> L @>{\Phi}>> P@>>> 0.
  \end{CD}
  $$
  This completes the proof that the homology of the complex before
  the Reidemeister move is identified with the complex pictured in
  Figure~\ref{fig:RIIIB}. 
  
  We now give a more intrinsic description of the
   signs in the chain
  complex  of Figure~\ref{fig:RIIIB}.  To this end, consider the
  simplicial complex ${\mathcal P}$ obtained as the product of the
  two-dimensional simplicial complex appearing in
  Figure~\ref{fig:RIIIB} (5 vertices, 6 edges, and 2 faces)
 with a cube of the appropriate dimension.
  It is easy to see that  \({\mathcal
    P}\) is contractible. Let $\psi$ be the
  two-dimensional cochain on ${\mathcal P}$ with $\Zmod{2}$
  coefficients which takes
  non-trivial values on faces of type $C$ or $Y$ (as in the proof of
  Lemma~\ref{lemma:ExistsAssignment}.) Above, we have described an
  explicit sign assignment \(\epsilon\) on the edges of \({\mathcal
    P}\) for which \(d \epsilon = \psi\).  
  As in Lemma~\ref{lemma:Uniqueness},
  $H^1({\mathcal P},\Zmod{2})=0$, so  if $\epsilon_1$ and $\epsilon_2$ are
  any two one-dimensional cochains with
  $\delta\epsilon_1=\delta\epsilon_2=\psi$, then they determine
  isomorphic chain complexes. 

Now consider  the diagram we obtain after making
 a Reidemeister III move. This diagram and the associated chain
complex are shown in Figure~\ref{fig:RIIID}. We can contract the
boldface edges (just as we did with the complex in
Figure~\ref{fig:RIII}) to obtain a new chain complex which agrees (up
to sign) with \(P\). To show that the two complexes are genuinely
isomorphic, it is enough to check that the corresponding obstruction
cocycles \(\psi,\psi' \in C^2({\mathcal P}, \Z/2)\) are the same. If
we choose the upward pointing orientation at all of the crossings in
Figures~\ref{fig:RIII} and~\ref{fig:RIIID} (and use the same
orientations at crossings not shown in the diagram), then 
this is easily seen to be the case.
\end{proof}

\begin{proof}[Proof of Theorem~\ref{thm:KnotInvariant}]
  According to a classical result of Reidemeister, an invariant of
  link projections which is unchanged under the three Reidemeister
  moves is in fact a link invariant, cf. for
  example~\cite{BurdeZieschang}.
(Note that it is actually sufficient to check invariance under the
three moves considered here, since  the ``other'' Reidemeister I and III
moves can be obtained by composing the moves we have studied  with some
Reidemeister II moves.)
  Thus, $\Kh$ is a link invariant according to
  Propositions~\ref{prop:RI},~\ref{prop:RII}, and~\ref{prop:RIII}.
\end{proof}

\section{Basic properties}
\label{sec:Properties}

In this section, we sketch the proofs  of the properties 
 stated in the introduction. We begin by giving two
equivalent definitions of the reduced homology. 

Let $\rWedge^* V(\Diagram)$ be the subalgebra of 
$\Wedge^* V(\Diagram)$ generated by the kernel of the map
$V(\Diagram)\longrightarrow \Z$ defined by 
$$\sum n_i a_i \mapsto \sum n_i.$$
There is a corresponding subcomplex $\rC(\Diagram)\subset
C(\Diagram)$.

Given a generic point $p$ on the knot projection, we also
have a subalgebra
$$a_p \wedge \Wedge^* V(\Diagram(I))\subset \Wedge^* V(\Diagram(I)),$$
where $a_p$ denotes the component of \(\Diagram(I)\) containing $p$.
There is a corresponding subcomplex 
$$\rrC(\Diagram)=a_p\wedge C(\Diagram)\subset C(\Diagram).$$

\begin{lemma}
  Wedge product with $a_p$ induces an isomorphism
  of $\rWedge^* V(\Diagram)$ with $a_p \wedge \Wedge^* V(\Diagram)$.
  In fact, this induces an isomorphism of complexes
  $\rrC(\Diagram)\cong \rC(\Diagram)$.
\end{lemma}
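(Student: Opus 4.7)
The lemma has two parts: pointwise, that $\wedge a_p^I$ identifies $\rWedge^* V(\Diagram(I))$ with $a_p^I \wedge \Wedge^* V(\Diagram(I))$; and globally, that these identifications assemble into a chain isomorphism $\rC(\Diagram) \cong \rrC(\Diagram)$.

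For the pointwise statement, fix a vertex $I$ and extend $\{a_p^I\}$ to a basis of $V(\Diagram(I))$ by adjoining the differences $\{a_i - a_p^I\}_{i \neq p}$. Then $\rWedge^* V(\Diagram(I))$ is visibly the exterior algebra on the kernel of the augmentation, with basis given by wedge monomials in the $a_i - a_p^I$; and because $a_p^I \wedge a_p^I = 0$, the target $a_p^I \wedge \Wedge^* V(\Diagram(I))$ has a basis consisting of $a_p^I$ wedged with those same monomials. The map $\wedge a_p^I$ therefore takes the first basis bijectively onto the second.

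For the global statement, define $\phi \colon C(\Diagram) \to C(\Diagram)$ vertex-wise by $\phi(v) = a_p^I \wedge v$ for $v \in \Wedge^* V(\Diagram(I))$; by the previous step, $\phi$ restricts to a vertex-wise bijection from $\rC(\Diagram)$ onto $\rrC(\Diagram)$. The plan is to compare $\phi \circ F_e$ with $F_e \circ \phi$ edge by edge. For a merge edge $e\colon I_0 \to I_1$, the map $F_e$ is the exterior-algebra extension of the linear quotient $V(S_1) \to V(S_2)$ and hence an algebra homomorphism; in every case (whether $p$ lies on one of the merging circles or not) one has $F_e(a_p^{I_0}) = a_p^{I_1}$, so $\phi$ and $F_e$ commute. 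For a split edge $e\colon I_0 \to I_1$, one has $F_e(w) = (a_1 - a_2) \wedge \tilde w$, and since both $a_p^I$ and $a_1 - a_2$ are odd-degree elements, a direct calculation (with cases according to whether $p$ lies on the pre-split circle) gives $\phi \circ F_e = -F_e \circ \phi$.

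To absorb this sign, let $\nu(I)$ be the number of split edges along any monotone path from the minimal vertex $I_{\min}$ to $I$ in the hypercube. This count is path-independent: the total number of edges in such a path is $|I| - |I_{\min}|$, while the number of splits minus the number of merges equals the change in $|\pi_0(\Diagram)|$ from $I_{\min}$ to $I$, both of which depend only on the endpoints. Set $\eta(I) = (-1)^{\nu(I)}$ and $\tilde\phi(v) = \eta(I)\,a_p^I \wedge v$. Then $\eta$ is constant across merge edges and flips sign across split edges, precisely cancelling the sign discrepancy; hence $\tilde\phi$ commutes with $\partial_\epsilon$, and its restriction to $\rC(\Diagram)$ is the required chain isomorphism onto $\rrC(\Diagram)$. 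The main obstacle, and essentially the only one, is the sign mismatch at split edges, which comes from commuting two odd-degree wedge multiplications; the path-independence of $\nu(I)$ is exactly what allows this sign to be absorbed into a vertex-dependent rescaling of $\phi$.
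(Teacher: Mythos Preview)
Your argument is correct. The paper's own proof is the single word ``Straightforward,'' so you have supplied the details the authors omitted. The one point worth noting is that the naive map $v\mapsto a_p\wedge v$ is \emph{not} a chain map on the nose, precisely because of the sign $-1$ you identify at split edges; your vertex-dependent sign $\eta(I)=(-1)^{\nu(I)}$ is the right fix, and your verification that $\nu(I)$ is path-independent (via $s+m=|I|$ and $s-m=|\pi_0(\Diagram(I))|-|\pi_0(\Diagram(I_{\min}))|$) is clean. So your proof actually clarifies something the paper glosses over: the ``induced'' isomorphism of complexes is wedge with $a_p$ twisted by this parity sign, not the bare wedge.
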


\begin{proof}
  Straightforward.
\end{proof}

\begin{proof}[Proof of Proposition~\ref{prop:Reduction}]
  If \(\Diagram \) is a diagram of the link \(L\), we define
 $\rKh(L)$ to be $H_*(\rC(\Diagram))$.
   Consider the disjoint union of $\Diagram$ with an unknotted link $O$.
  Taking $p$ to lie on the unknotted component, it is easy to see that
  $\rrC(\Diagram\cup O)\cong C(\Diagram)$. On the other hand, taking
  $p$ elsewhere, we see that $\rrC(\Diagram\cup O)\cong
  \rrC(\Diagram)\oplus \rrC(\Diagram)$.  Since $\rrC(\Diagram)\cong
  \rC(\Diagram)$, and the latter chain complex is independent of the
  placement of the basepoint, we conclude that
  $C(\Diagram)\cong \rC(\Diagram)\oplus\rC(\Diagram).$
  Passing to homology, we obtain the 
  stated result.
\end{proof}

\begin{proof}[Proof of Proposition~\ref{prop:ModTwoReduction}.]
  The mod two reduction of $\Wedge^* V(\Diagram)$, equipped with our
  multiplication and comultiplication maps (whose mod two reduction
  no longer depends on the choice of orientations at the crossings) coincides
  with the mod two reduction of Khovanov's TQFT.  It follows at once that if
  ${\mathrm{CKh}}(\Diagram)$ denotes Khovanov's complex, then
  ${\mathrm{CKh}}(\Diagram)\otimes \Zmod{2}\cong
  C(\Diagram)\otimes\Zmod{2}$. 
 The
  proposition now follows from the universal coefficient theorem.
\end{proof}

\begin{proof}[Proof of Proposition~\ref{prop:EulerCharacteristic}]
  This a direct consequence
  of Proposition~\ref{prop:ModTwoReduction} and the analogous
   formula for the ordinary Khovanov homology.
\end{proof}

\begin{proof}[Proof of Proposition~\ref{prop:SkeinExactSequence}]
  By construction, $\Kh(L)$ is the homology of a mapping cone of
  $C(L_0)$ to $C(L_1)$. The stated exact sequence is an application of
  the long exact sequence of a mapping cone, with appropriate shifts
  in gradings. A corresponding result for $\rKh$ is also apparent,
  using the definition of the reduced complex $\rC$.
\end{proof}

\section{Calculations}
\label{sec:Calculations}

In this section, we give a few computations of the odd Khovanov
homology.
In light of Proposition~\ref{prop:Reduction}, we may as well restrict our
attention to the reduced  groups \(\rKh(L)\).
For the simplest knots and links, these groups exhibit a pattern which
is familiar from the usual Khovanov homology:

\begin{definition}
 \(\rKh(L)\) is said to be {\em \(\sigma\)--thin} if \(\rKh(L)_{m,s} = 0\)
whenever \(s-2m \neq \sigma(L)\). 
\end{definition} 

Here, our sign convention for the signature is that positive links have
positive signature. In analogy with Lee's theorem \cite{EunSooLee} on the
ordinary cohomology of alternating knots, we have 

\begin{prop}
If \(L\) is an alternating link, then  \(\rKh(L)\) is \(\sigma\)-thin.
\end{prop}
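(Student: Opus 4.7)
The plan is to prove this by induction on the number of crossings of a reduced alternating diagram $\Diagram$ of $L$. For the base case of zero crossings, $L$ is a trivial link, and a direct calculation shows that $\rKh(L)$ is supported on the diagonal $s - 2m = 0 = \sigma(L)$.

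For the inductive step, I would pick a crossing $c$ of $\Diagram$ and apply the skein exact sequence of Proposition~\ref{prop:SkeinExactSequence} to its two resolutions $L_0$ and $L_1$. A classical argument using the chessboard coloring shows that each of $L_0$ and $L_1$ can be represented by an alternating diagram with fewer crossings; although one of them may become split, this is harmless, as the inductive hypothesis is formulated for arbitrary alternating links (and a disjoint union of $\sigma$-thin links, with appropriate Künneth-type behavior for $\rKh$, remains $\sigma$-thin on the diagonal prescribed by the additivity of the signature). By induction, both $\rKh(L_0)$ and $\rKh(L_1)$ are $\sigma$-thin.

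The heart of the argument is then a grading-alignment check. Using the grading shifts in the skein exact sequence (as recorded in \cite{Thin} or \cite{RasmussenSurvey}) together with the classical relation $\sigma(L_0), \sigma(L_1) \in \{\sigma(L) \pm 1\}$ for the two resolutions of a crossing in a reduced alternating diagram (cf. Gordon--Litherland or Murasugi), one verifies that both $\rKh(L_0)$ and $\rKh(L_1)$ contribute, after their respective grading shifts, to the single diagonal $s - 2m = \sigma(L)$ in the exact triangle. Exactness then forces $\rKh(L)$ to be concentrated on this diagonal as well, closing the induction.

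The main obstacle is the bookkeeping for the signs of crossings and the corresponding grading shifts: the identification of which of $L_0, L_1$ corresponds to the oriented smoothing depends on the sign of $c$, and the change in $\sigma$ under resolution must be tracked in a way that matches the $q$- and $m$-shifts in the skein sequence. An alternative route would be to invoke the (separately proved) statement that $\rKh(L) \cong \roKh(L)$ for non-split alternating $L$, reducing the non-split case directly to Lee's theorem \cite{EunSooLee}, and then handle split alternating links by the Künneth principle for $\rKh$; but the direct inductive argument via the skein sequence is self-contained and parallels Lee's original proof for ordinary Khovanov homology.
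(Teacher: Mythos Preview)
Your proposal is correct and follows essentially the same route as the paper, which simply observes that the standard proof of Lee's theorem for ordinary reduced Khovanov homology uses only the skein exact sequence and the base case for the unknot, both of which hold for $\rKh$; your inductive sketch is precisely that standard argument spelled out. One caution on your aside: the alternative route via $\rKh(L)\cong\roKh(L)$ would be circular in this paper, since that isomorphism is \emph{derived} from $\sigma$-thinness (both groups being determined by the Jones polynomial and signature once thin), not used to prove it.
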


\begin{proof}
The standard proof of this result for the ordinary Khovanov homology
relies on two facts: first,  that the reduced homology 
\(\roKh\) satisfies a skein exact
sequence like the one in Proposition~\ref{prop:SkeinExactSequence},
and second, that \(\roKh\) of the unknot 
is supported in bigrading \((0,0)\). Since
both of these hold for \(\rKh\) as well, the proof goes through
without change.  
\end{proof}
\noindent More generally, the same result holds if \(L\) is 
{\it quasi-alternating} in the sense of \cite{BrDCov} ({\em c.f.}
\cite{Thin}). 

In light of Proposition~\ref{prop:EulerCharacteristic}, 
it is not difficult to see that if \(\rKh(L)\) is \(\sigma\)-thin, it
is completely determined by the Jones polynomial and signature of
\(L\). Since the same result is true for \(\roKh\), we see that
\(\rKh(L) \cong \roKh(L)\) whenever \(L\) is alternating. 
The analogous statement for the unreduced homology is
emphatically not true; \(\roKh\) and \(\oKh\) are related by a long
exact sequence analogous to Proposition~\ref{prop:Reduction}
$$\begin{CD}
@>>> \roKh_{s,m+1}(L) @>>> \oKh_{s,m}(L)@>>>\roKh_{s,m-1}@>\partial>>
\roKh_{s+1,m+1}(L) @>>>
\end{CD}$$
but the boundary map in this sequence is almost never \(0\). The
difference between the two is already evident with the trefoil knot,
for which \(\Kh\) has rank 6 and \(\oKh\) has rank 4. 

To find examples where \(\rKh(L) \neq \roKh(L)\), we  resort to
computer calculations. Using a {\it Mathematica} program based on
Bar-Natan's original program for computing the Khovanov homology
\cite{BarNatan}, we computed  \(\rKh(K)\otimes \qq\)
 for all nonalternating knots \(K\) with fewer than \(12\) crossings.
The first knot which is not quasi-alternating (and thus the first for
which we might  expect the two to differ) is the \((3,4)\) torus knot,
number \(8_{19}\) in the Rolfsen table. Indeed, we find that
\(\rKh(8_{19})\) has rank \(3\), with graded Poincar{\'e} polynomial 
\begin{align*}
\prKh(8_{19})(q,t) = \sum_{m,s} t^m q^s \dim \rKh(8_{19}) \otimes \qq
= q^6 + q^{10}t^2 + q^{16}t^5,
\end{align*}
while \(\roKh(8_{19})\) is known to have rank \(5\) \cite{KhoHo}, 
with graded Poincar{\'e} polynomial 
\begin{align*}
\overline{Kh}(8_{19})(q,t)  = \sum_{m,s} t^m q^s \dim \roKh(8_{19})
\otimes \qq 
 = q^6 + q^{10}t^2 +q^{12}t^3 + q^{12}t^4+ q^{16}t^5.
\end{align*}
This computation is somewhat disappointing, since it indicates that
\(\Kh\) cannot possess a cancelling differential analogous to the Lee
differential \cite{EunSooLee} and its generalizations \cite{Turner}. 

The other non-quasi-alternating knot with fewer than 10 crossings has
Rolfsen number \(9_{42}\). \(\rKh(9_{42})\otimes \qq\) 
turns out to be \(\sigma\)-thin. However, the corresponding statement
over \(\Z\) cannot be true,  since if we use  \(\Z/2\) coefficients,
 \(\rKh(9_{42})\)  reduces to
 \(\roKh(9_{42})\), which is not \(\sigma\)-thin. These two examples
 exhibit a general trend which  continues with the 10 and 11-crossing
 knots; 
namely, that the rank of \(\rKh(K)\) tends to be
 smaller than that of \(\roKh(K)\). The table below compares the
 dimensions of \(\rKh \otimes \qq \) and \(\roKh \otimes \qq \) for the
 non-alternating 10-crossing knots, omitting those knots for which
 both groups are \(\sigma\)--thin. 

$$
\begin{array}{|c|c|c||c|c|c||c|c|c|}
\hline
\rule{0pt}{14pt} K &  \thinspace  \rKh(K) & 
\thinspace  \roKh(K)  &
 K &  \thinspace  \rKh(K) &  \thinspace  \roKh(K)
 & K &  \thinspace  \rKh(K) & \thinspace  \roKh(K)  \\

\hline
\hline
10_{124} & 3 & 7 &
10_{139} & 7 & 11 &
10_{153} & 9 & 17 
\\
10_{128} & 11 & 13 &
10_{145} & 7 & 13 &
10_{154} & 17 & 21 
\\
10_{132} & 5 & 11 &
10_{152} & 15 & 19 &
10_{161} & 9 & 13 
\\
10_{136} & 15 & 17 &
& & &
& & \\
\hline
\end{array}
$$
\vskip0.07in

Among the 10-crossing knots,
  \(\rKh\) is \(\sigma\)--thin whenever 
 which \(\roKh\) is \(\sigma\)--thin; 
when \(\roKh\) is not \(\sigma\)--thin, the
 rank of \(\rKh\) is strictly smaller. The same pattern also
 holds for the 11-crossing knots.
Below, we tabulate the
 Poincar{\'e} polynomials of those 10-crossing knots for which 
\(\rKh\)  is not \(\sigma\)-thin.

\begin{align*}
\overline{Kh'}(10_{124}) & = q^8 + q^{12} t^2 + q^{20} t^7  \\
\overline{Kh'}(10_{139}) & = q^8 + q^{12} t^2 + q^{16} t^5 +
q^{18} t^6 + q^{20} t^7 + q^{22} t^8 + q^{24} t^9 \\
\overline{Kh'}(10_{145}) & =  q^{-20} t^{-9} + q^{-18}t^{-8} + 
q^{-16} t^{-7} +  q^{-14} t^{-6} + q^{-8} t^{-3} + q^{-8} t^{-2} +
q^{-4} \\
\overline{Kh'}(10_{152}) & =  q^{-26} t^{-10} + 2 q^{-24} t^{-9} + 2
q^{-22} t^{-8} + 3 q^{-20} t^{-7}  \\ &  \phantom{XXX} + 2q^{-18} t^{-6} +   2q^{-16} t^{-5}
+ q^{-14} t^{-4}  + q^{-12} t^{-2} + q^{-8} \\
\overline{Kh'}(10_{153}) & = 
q^{-10} t^{-5} + q^{-8} t^{-4} + q^{-6} t^{-3} + q^{-4} t^{-2} + 1 + q^2 t^2 +
  q^4 t^3 + q^6 t^4 + q^8 t^5 \\
\overline{Kh'}(10_{154}) & = q^6 + q^{10} t^2 + q^{10} t^3 + 2 q^{12} t^4 + 2 q^{14} t^5 + 2 q^{16} t^6 + 
 3 q^{18} t^7 + 2 q^{20} t^8 + 2 q^{22} t^9 + q^{24} t^{10} \\
\overline{Kh'}(10_{161}) & =
 q^{-22} t^{-9} +q^{-20} t^{-8} + q^{-18} t^{-7}   + 
 q^{-16} t^{-6}  \\ & \phantom{XXX} + q^{-14} t^{-5} + q^{-12} t^{-4} + q^{-10} t^{-3} +
 q^{-10} t^{-2} + q^{-6}
\end{align*}

\bibliographystyle{plain}
\bibliography{biblio}

\end{document}